\newtheorem{thm}{Theorem}[section]
\newtheorem{lem}[thm]{Lemma}
\newtheorem{prop}[thm]{Proposition}
\theoremstyle{definition}
\theoremstyle{remark}
\theoremstyle{plain}
\numberwithin{equation}{section}
\DeclareRobustCommand\widecheck[1]{{\mathpalette\@widecheck{#1}}}
\def\@widecheck#1#2{%
    \setbox\z@\hbox{\m@th$#1#2$}%
    \setbox\tw@\hbox{\m@th$#1%
       \widehat{%
          \vrule\@width\z@\@height\ht\z@
          \vrule\@height\z@\@width\wd\z@}$}%
    \dp\tw@-\ht\z@
    \@tempdima\ht\z@ \advance\@tempdima2\ht\tw@ \divide\@tempdima\thr@@
    \setbox\tw@\hbox{%
       \raise\@tempdima\hbox{\scalebox{1}[-1]{\lower\@tempdima\box
\tw@}}}%
    {\ooalign{\box\tw@ \cr \box\z@}}}
\begin{document}

\title{A conjecture for the regularized fourth moment of Eisenstein series}


\author{Goran Djankovi{\' c}}

\address{
University of Belgrade\\ 
Faculty of Mathematics\\ 
Studentski Trg 16, p.p. 550\\ 
11000 Belgrade, Serbia
}

\email{djankovic@matf.bg.ac.rs}

\author{Rizwanur Khan}

\address{
Texas A\&M University at Qatar\\ PO Box 23874\\ Doha, Qatar
}

\email{rizwanur.khan@qatar.tamu.edu}


\begin{abstract}
We formulate a version of the Random Wave Conjecture for the fourth moment of Eisenstein series which is based on Zagier's regularized inner product. We prove an asymptotic formula expressing the regularized fourth moment as a mean value of $L$-functions. This is an advantage over previous work in the literature, which has approached the fourth moment problem through truncated Eisenstein series and not yielded a suitable expression in terms of $L$-functions.
\end{abstract}


\keywords{Automorphic forms,  Eisenstein series, equidistribution, $L^4$-norm, regularized inner products, quantum chaos, random wave conjecture, $L$-functions}

\subjclass[2010]{Primary: 11F12, 11M99; Secondary: 81Q50}

\thanks{The first author was  partially supported by  Ministry of Education, Science and Technological Development of  Republic of Serbia, Project no. 174008.}

\maketitle


\section{Introduction}

One of the main research themes in recent years in the theory of automorphic forms is the problem of mass distribution. Let $X=\Gamma \backslash \mathbb{H}$, where $\mathbb{H}$ is the upper half complex plane and $ \Gamma = SL_2(\mathbb{Z})$.  In his PhD thesis, Spinu \cite{Sp} obtained the following type of weak equidistribution result:
\begin{equation}
\int_{X} |{E}_A(z, \frac{1}{2} + iT)|^4 d\mu z \ll T^\epsilon,  
\end{equation}
where  $d \mu (z)=\frac{dx dy}{y^2}$ and ${E}_A(z,s)$ is the truncated Eisenstein series, which on the fundamental domain equals $E(z,s)$ for $\text{Im}(z)\le A$, and $E(z,s)$ minus its constant term for $\text{Im}(z)>A$. See the next section for a more careful definition. Spinu's result (see also \cite{Lu} for a closely related result) is in line with a much more general conjecture, called the Random Wave Conjecture. This conjecture was made for Eisenstein series in \cite[section 7.3]{HR}. In terms of moments this implies: for any even integer $p \ge 0$ and any nice compact $\Omega \subset X$, we should have
\begin{equation} \label{theconj}
\lim_{T \rightarrow \infty} \frac{1}{{\rm vol}(\Omega)}  \int_{\Omega} \Big|\frac{E_A(z, \frac{1}{2} + iT)}{\sqrt{2 \log T}}\Big|^p d\mu z = \frac{c_p}{{\rm vol}(X)^{p/2}},
\end{equation}
where $c_p$ is the $p$th moment of the normal distribution $\mathcal{N}(0, 1)$. The same conjecture is also made for $E(z,\frac{1}{2} + iT)$.  As we will see below, $\sqrt{2 \log T}$ roughly equals $\|E_A(\cdot, \frac{1}{2} + iT)\|_2$.

One would of course like to go beyond Spinu's upper bound and prove an asymptotic for the fourth moment of Eisenstein series. In \cite{BK}, this was achieved, conditional on the Generalized Lindel\"{o}f Hypothesis, for Hecke Maass forms of large eigenvalue when $\Omega=X$, and agreement was found with the RWC. Thus in analogy one would expect (\ref{theconj}) to also hold for $p=4$ and $\Omega=X$, and one may hope that the statement in this case can be proven unconditionally.  After all, such problems can be a bit easier for Eisenstein series -- for example, recall that the case $p=2$ of (\ref{theconj}) was first proven for Eisenstein series \cite{LS} before the analogue was proven for Hecke Maass forms \cite{Li, So}. 

What would the proof of such an asymptotic entail? The starting point in \cite{BK} is to relate the fourth moment of an $L^2$-normalized Hecke Maass form $f$ to $L$-functions. One uses the spectral decomposition and Plancherel's theorem to write
\begin{align}
\label{setup} \langle f^2, f^2 \rangle = \sum_{j\ge 1} |\langle f^2, u_j \rangle |^2 + \ldots,
\end{align} 
where the inner product is the Petersson inner product, $\{ u_j : j\ge 1\}$ is an orthonormal basis of Hecke Maass forms, and the ellipsis denotes the contribution of the Eisenstein spectrum and constant eigenfunction. Next one can use Watson's triple product formula to relate the squares of the inner products on the right hand side to central values of $L$-functions. Thus the problem is reduced to one of obtaining a mean value of $L$-functions. If one tries to mimic this set up for $E(z,\frac12+iT)$ in place of $f$, the first obvious difficulty encountered is that the left hand side of (\ref{setup}) does not even converge. To circumvent this, Spinu worked with the truncated $E_A(z,\frac12 +iT)$, which decays exponentially at the cusp. However a major drawback is that $E_A(z,\frac12 +iT)$ is not automorphic, so Spinu could not obtain a precise relationship with $L$-functions. He could only obtain an upper bound \cite[section 4.2]{Sp}.

The goal of this paper is to reformulate entirely the fourth moment problem for Eisenstein series. To make sense of $\langle E^2(\cdot,\frac12 + iT), E^2(\cdot,\frac12 + iT) \rangle$, we contend that it is more natural\footnote{We thank Matthew Young for suggesting this approach to us.} to use Zagier's regularized inner product \cite{Za}, which does converge. The basic idea of Zagier's method is that to kill off the growth of an automorphic form, one should not subtract off the constant term like Spinu does, but rather subtract off another Eisenstein series in such a way that the final object is square integrable and automorphic. This way we will end up with a precise relationship between a {\it regularized} fourth moment and $L$-functions. This is the first goal of our paper, and we will prove

\begin{thm} [Regularized fourth moment in terms of $L$-functions] \label{MainResult}  Let $\{ u_j : j\ge 1 \}$ denote an orthonormal basis of even and odd Hecke Maass cusp forms for $\Gamma$, ordered by Laplacian eigenvalue $\frac{1}{4}+it_j^2$, and let $\Lambda(s, u_j)$ denote the corresponding completed $L$-functions. Let $\xi(s)$ denote the completed Riemann $\zeta$ function. As $T \rightarrow \infty$, we have
$$
\int_{X}^{reg} |E(z, 1/2 +iT)|^4 d\mu(z)
$$
$$= \frac{24}{\pi} \log^2 T
+\sum_{\substack{j\ge 1}} \frac{\cosh(\pi t_j)}{2}  \frac{ |\Lambda(\frac{1}{2} + 2Ti, u_j)|^2   \Lambda^2(\frac{1}{2}, u_j) }{L(1, \text{sym}^2 u_j) \, |\xi(1+2Ti)|^4  } \;
 + O(\log^{5/3+\epsilon} T),
$$ 
for any $\epsilon>0$.
\end{thm}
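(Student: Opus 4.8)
The plan is to interpret the left-hand side as a regularized Petersson inner product and expand it spectrally. Write $s=\tfrac12+iT$ and $F(z)=|E(z,s)|^2=E(z,s)E(z,\bar s)$, so that the quantity to be computed is $\langle F,F\rangle_{\mathrm{reg}}=\int_{X}^{reg}|F(z)|^2\,d\mu(z)$. First I would record the constant term (zeroth Fourier coefficient in $x$) of $F$: from $E(z,s)=y^{s}+\phi(s)y^{1-s}+(\text{exponentially decaying})$, where $\phi(s)=\xi(2s-1)/\xi(2s)$, one finds that the constant term of $F$ equals $(1+|\phi(s)|^2)\,y+\overline{\phi(s)}\,y^{1+2iT}+\phi(s)\,y^{1-2iT}$, and since $|\phi(\tfrac12+iT)|=1$ the leading coefficient is $2$. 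Because this grows like $y$, we have $F\notin L^2(X)$, which is precisely why Zagier's regularization \cite{Za} is needed: one subtracts from $F$ an explicit combination $h$ of (incomplete) Eisenstein series carrying the same constant-term growth, so that $F-h$ is square-integrable and automorphic, and defines $\int_{X}^{reg}$ accordingly.

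Next I would apply the regularized analogue of Parseval's formula to $\langle F,F\rangle_{\mathrm{reg}}$. Against the spectral basis this produces three pieces:
\begin{equation*}
\langle F,F\rangle_{\mathrm{reg}}=\frac{|\langle F,1\rangle_{\mathrm{reg}}|^2}{\mathrm{vol}(X)}+\sum_{j\ge1}|\langle F,u_j\rangle|^2+\frac{1}{4\pi}\int_{-\infty}^{\infty}\big|\langle F,E(\cdot,\tfrac12+it)\rangle_{\mathrm{reg}}\big|^2\,dt,
\end{equation*}
where the cuspidal inner products converge absolutely (each $u_j$ decays rapidly), while the constant and continuous inner products must be taken in the regularized sense, and one must check that the regularization introduces no additional boundary contributions beyond those bookkept here.

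The cuspidal sum is where the $L$-functions appear, and I expect it to be the most structurally important step. I would compute $\langle F,u_j\rangle=\int_X E(z,s)E(z,\bar s)\overline{u_j(z)}\,d\mu(z)$ by unfolding one of the two Eisenstein factors against $\Gamma_\infty\backslash\Gamma$, turning the integral into $\int_0^\infty\!\int_0^1 y^{s}E(z,\bar s)\overline{u_j(z)}\,\tfrac{dx\,dy}{y^2}$. The $x$-integral extracts the zeroth Fourier coefficient of $E(z,\bar s)\overline{u_j}$, producing a Rankin--Selberg Dirichlet series pairing the Hecke eigenvalues of $u_j$ against the divisor coefficients of $E(\cdot,\bar s)$; this series factors as a product of two shifted $L$-functions of $u_j$, and the remaining $y$-integral is a Mellin transform of a product of two $K$-Bessel functions, evaluating in closed form to the archimedean $\Gamma$-factors (this is where $\cosh(\pi t_j)$ enters, via $|\Gamma(\tfrac12+it_j)|^{-2}$). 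Assembling these, $\langle F,u_j\rangle$ equals $\Lambda(\tfrac12+2iT,u_j)\Lambda(\tfrac12,u_j)$ times explicit normalizing factors ($\cosh(\pi t_j)$, $|\xi(1+2iT)|^2$, and the norm $\langle u_j,u_j\rangle$ which contributes $L(1,\mathrm{sym}^2u_j)$). Taking $|\langle F,u_j\rangle|^2$ and using that $\Lambda(\tfrac12,u_j)$ is real yields exactly the summand $\tfrac{\cosh(\pi t_j)}{2}\,|\Lambda(\tfrac12+2Ti,u_j)|^2\Lambda^2(\tfrac12,u_j)\big/\big(L(1,\mathrm{sym}^2u_j)\,|\xi(1+2Ti)|^4\big)$.

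It remains to produce the main term $\tfrac{24}{\pi}\log^2 T$ and the error $O(\log^{5/3+\epsilon}T)$ from the two non-cuspidal pieces, and this is the step I expect to be the main obstacle. For the constant-function piece I would evaluate the regularized second moment $\langle F,1\rangle_{\mathrm{reg}}=\int_{X}^{reg}|E(z,s)|^2\,d\mu(z)$ using Zagier's regularized Rankin--Selberg formula for the inner product of two Eisenstein series, extracting its leading $\log T$ asymptotics from the logarithmic derivatives of $\xi$ at $1+2iT$; squaring and dividing by $\mathrm{vol}(X)=\pi/3$ supplies the bulk of the $\log^2 T$ main term. For the continuous-spectrum piece, the triple product $\langle F,E(\cdot,\tfrac12+it)\rangle_{\mathrm{reg}}$ is an integral of three Eisenstein series and unfolds to a product of four $\zeta$-values, with the regularization supplying the polar and residual terms; I would isolate any main contribution near $t=0$ (and near the resonance $t\approx 2T$) and bound the rest. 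Controlling this continuous integral is the crux: it forces mean-value and pointwise bounds for $\zeta$ on the critical line, together with the lower bound for $\zeta(1+2iT)$ entering through $|\xi(1+2iT)|$, and it is these zeta estimates, rather than any new automorphic input, that govern the final error exponent $5/3+\epsilon$.
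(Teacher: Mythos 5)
Your cuspidal computation is sound and matches the paper's (unfolding gives $\langle |E|^2,u_j\rangle$ proportional to $\Lambda(\tfrac12+2iT,u_j)\Lambda(\tfrac12,u_j)$, with $|\rho_j(1)|^2=2\cosh(\pi t_j)/L(1,\mathrm{sym}^2u_j)$ supplying the $\cosh$ and symmetric-square factors), but your treatment of the non-cuspidal pieces has a fatal structural gap. The identity you write down is not the regularized Plancherel formula: for renormalizable $F$ and $G$, Proposition \ref{Prop_Regular_Planch} carries two extra terms, $\langle F,\mathcal{E}_{\Psi}\rangle_{reg}+\langle \mathcal{E}_{\Phi},G\rangle_{reg}$, pairing each function against the combination of Eisenstein series subtracted from the other; your hope that ``the regularization introduces no additional boundary contributions'' is precisely what fails. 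In the paper these cross terms, evaluated by Zagier's triple-product formula (\ref{3eisen}), become the eight quotients $\Xi_1,\dots,\Xi_8$ in (\ref{4Eisen_general_s_j}), and they are the sole source of the main term: their limit contains $\tfrac{4}{\xi(2)}\bigl[{\rm Re}\tfrac{\xi''}{\xi}(1+2Ti)+2\bigl|\tfrac{\xi'}{\xi}(1+2Ti)\bigr|^{2}+{\rm Re}\tfrac{(\xi')^2}{\xi^2}(1+2Ti)\bigr]=\tfrac{24}{\pi}\log^2T+O(\log^{5/3+\epsilon}T)$, where the error exponent comes from Vinogradov--Korobov bounds on $\zeta'/\zeta$ and $\zeta''/\zeta$ at $1+2iT$, not from the continuous spectrum. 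By contrast, the term you designate as the source of the main term vanishes: by Zagier's orthogonality (\ref{2eisen_orthog}), the regularized integral of a product of two Eisenstein series is zero for generic parameters, so $\langle F,1\rangle_{reg}$ is $0$ in the limit, not $\sim 2\log T$; you have conflated the \emph{regularized} second moment with Spinu's \emph{truncated} second moment (and even granting $2\log T$, squaring would give $\tfrac{12}{\pi}\log^2T$, not $\tfrac{24}{\pi}\log^2T$). Likewise the continuous-spectrum integral has no main contribution near $t=0$ or $t\approx\pm 2T$: it is $O(T^{-1/6})$ by subconvexity and the fourth moment of $\zeta$ (Lemma \ref{Lem_bound_cont_spec}).

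A second missing ingredient is that no version of the Plancherel formula can be applied directly at the point of interest, because its hypotheses degenerate there: the constant term of $E(z,\tfrac12+iT)^2$ (or of your $F=|E|^2$) contains the exponent $y^{1}$, and the exponent sets of $F$ and $G$ satisfy $\alpha_j=\overline{\beta_k}$, violating the conditions $\alpha_j\neq 1$ and $\alpha_j\neq\overline{\beta_k}$ of Proposition \ref{Prop_Regular_Planch} (an exponent equal to $1$ is excluded even in the definition of the regularized integral in Section 3, though the full integrand $|E|^4$ is safe, its exponents having real part $2$). The paper therefore shifts parameters, $s_1=iT$, $s_2=iT+\nu$, $s_3=iT$, $s_4=iT+\overline{\eta}$, applies the formula for generic $\nu,\eta$, and lets $\nu\to 0$ and then $\eta\to 0$: each individual $\Xi_j$ has a pole of order up to $2$ at $\eta=0$, and only a delicate cancellation ($F_2(0)=F_3(0)=F_6(0)$ and $F_2'(0)+F_3'(0)-2F_6'(0)=0$) makes the singularity of the sum removable, after which the finite limit is extracted. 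Without the cross terms and without this limiting and cancellation mechanism, your outline has no route to the stated main term.
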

\noindent This result is potentially very useful. We could try to obtain an asymptotic for the mean value of $L$-functions on the right hand side (and we will return to this problem in a future paper), thereby obtaining an asymptotic for the regularized fourth moment. This would be nice, but how would we know whether or not our answer is in agreement with the RWC? Thus the purpose of our second result is to translate the RWC to the setting of the regularized fourth moment. As defined in the next section, $\mathcal{D}_A$ is the part of the fundamental domain with $\text{Im}(z)\le A$.

\begin{thm} [RWC for the regularized fourth moment of Eisenstein series] \label{MainConj} Suppose that (\ref{theconj}) holds for $p=4$ and $\Omega=X$, and $p=4$ and $\Omega=\mathcal{D}_A$ for some $A=A(T)$ which tends to $\infty$ as $T \rightarrow \infty$. Then we have 
$$
\int_{X}^{reg} |E(z , \frac{1}{2} + iT)|^4 d \mu(z) \sim \frac{72}{\pi} \log^2 T.
$$
\end{thm}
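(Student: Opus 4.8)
The plan is to isolate the cusp by cutting the fundamental domain at a height $A=A(T)\to\infty$ and to feed the two hypotheses into the two resulting regions. Write the constant term of $E(z,\tfrac12+iT)$ as $c_0(y)=y^{1/2+iT}+\frac{\xi(2iT)}{\xi(1+2iT)}\,y^{1/2-iT}$, and note $|\xi(2iT)/\xi(1+2iT)|=1$ on the critical line. Since the zeroth Fourier coefficient of $|E|^4$ differs from $|c_0(y)|^4$ by a rapidly decaying function, the defining property of Zagier's regularization lets me split
\begin{equation*}
\int_X^{reg}|E|^4\,d\mu=\int_{\mathcal{D}_A}|E|^4\,d\mu+\int_{\mathrm{Im}(z)>A}^{reg}|E|^4\,d\mu,
\end{equation*}
the second term being the analytically continued integral over the cuspidal zone. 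On $\mathcal{D}_A$ we have $E=E_A$, so the first term is the genuine truncated fourth moment, and \eqref{theconj} with $p=4$, $\Omega=\mathcal{D}_A$ gives, using $c_4=3$, $\mathrm{vol}(X)=\pi/3$ and $\mathrm{vol}(\mathcal{D}_A)=\mathrm{vol}(X)-1/A$,
\begin{equation*}
\int_{\mathcal{D}_A}|E|^4\,d\mu\sim\frac{c_4(2\log T)^2}{\mathrm{vol}(X)^2}\,\mathrm{vol}(\mathcal{D}_A)\sim\frac{36}{\pi}\log^2 T.
\end{equation*}

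For the cuspidal term I would expand $E=c_0(y)+W(z)$, with $W$ the non-constant part, and integrate $|c_0+W|^4$ over the horocycle, producing three kinds of contributions. The pure constant-term piece is elementary: the non-oscillatory part of $|c_0(y)|^4$ is $6y^2$, whose analytic continuation yields only $-6A+O(A/T)$. The pure oscillatory piece is $\int_{\mathrm{Im}(z)>A}|W|^4\,d\mu$, which is exactly the mass that $\mathcal{D}_A$ omits from $X$; combining \eqref{theconj} for $\Omega=X$ and $\Omega=\mathcal{D}_A$ shows
\begin{equation*}
\int_{\mathrm{Im}(z)>A}|W|^4\,d\mu=\int_X|E_A|^4\,d\mu-\int_{\mathcal{D}_A}|E|^4\,d\mu\sim\frac{36}{\pi}\log^2 T\Big(1-\frac{\mathrm{vol}(\mathcal{D}_A)}{\mathrm{vol}(X)}\Big)\ll\frac{\log^2 T}{A}.
\end{equation*}
The essential contribution is the third kind, the mixed terms $4|c_0(y)|^2\int_0^1|W|^2\,dx$ and $\int_0^1(\overline{c_0}^2W^2+c_0^2\overline{W}^2)\,dx$. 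These interactions between the escaping constant term and the oscillatory part are precisely what the truncated moment discards (there $E_A=W$ in the cusp), and I expect their regularized integral to supply a second copy of $\frac{36}{\pi}\log^2 T$, doubling the answer to $\frac{72}{\pi}\log^2 T$.

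The hard part is the evaluation of these mixed integrals, which lies outside the reach of the Random Wave Conjecture: the horocycle averages $\int_0^1|W|^2\,dx$ do not equidistribute in the cusp, and the bulk of the relevant mass sits in the transition range $\mathrm{Im}(z)\asymp T$, where the Whittaker coefficients built from $K_{iT}(2\pi|n|y)/\xi(1+2iT)$ change character. I would therefore compute them directly from the Fourier--Whittaker expansion via a Rankin--Selberg unfolding. Two points demand care: first, the regularization must be tracked exactly, since both the constant-term piece and the mixed terms carry $A$-dependent growth (of size $A$ and $\log T\log A$ respectively) which must cancel against the growth of $\int_{\mathcal{D}_A}|E|^4$ beyond its leading order, so that the final answer is genuinely independent of $A$; second, $A=A(T)$ must be chosen to grow slowly enough -- any $A=o(\log^2 T)$ -- that the volume defect $1/A$, the tail $\int_{\mathrm{Im}(z)>A}|W|^4$, and the oscillatory remainders are all $o(\log^2 T)$. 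Adding the bulk contribution and the surviving part of the mixed cuspidal integral then yields $\int_X^{reg}|E(z,\tfrac12+iT)|^4\,d\mu\sim\frac{72}{\pi}\log^2 T$.
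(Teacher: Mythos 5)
Your overall framework coincides with the paper's (the paper organizes the same bookkeeping as computing $\int_X^{reg}|E|^4-\|E_A\|_4^4$ and then invoking the two RWC hypotheses), but the proposal has a genuine gap: your expansion of $|c_0+W|^4$ into ``three kinds of contributions'' omits the odd cross terms. Writing $|c_0+W|^4=\bigl(|c_0|^2+c_0\overline W+\overline{c_0}W+|W|^2\bigr)^2$, the terms trilinear in the constant term and linear in $W$, namely $2|c_0|^2\bigl(c_0\overline W+\overline{c_0}W\bigr)$, do vanish after $x$-integration because $W$ has no zeroth Fourier mode; but the terms linear in the constant term and cubic in $W$, i.e.\ $2\,\mathrm{Re}\int_{\mathcal{C}_A}\overline{c_0}\,W|W|^2\,d\mu$ (the paper's $2\int_{\mathcal{C}_A}(|E_A|^2\overline{E}_Ae+|E_A|^2E_A\overline{e})\,d\mu$), survive and are not a priori negligible: Cauchy--Schwarz bounds them by $4\bigl(\int_{\mathcal{C}_A}|E_A|^4\,d\mu\bigr)^{1/2}\bigl(\int_{\mathcal{C}_A}|eE_A|^2\,d\mu\bigr)^{1/2}$, and the second factor is genuinely of size $\log^2T$ (it is $\sim\frac{6}{\pi}\log^2T$). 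The paper makes these cubic terms $o(\log^2T)$ only by feeding \emph{both} RWC hypotheses into the first factor via $\int_{\mathcal{C}_A}|E_A|^4=\int_X|E_A|^4-\int_{\mathcal{D}_A}|E|^4\ll A^{-1}\log^2T$ --- exactly the bound you derived, but which you spent on the harmless pure-$|W|^4$ term rather than where the danger lies. Until these terms are controlled, no asymptotic can be concluded.

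Moreover, the step that actually produces the doubling from $\frac{36}{\pi}$ to $\frac{72}{\pi}$ --- that the quadratic cross terms $4|c_0|^2|W|^2+2\,\mathrm{Re}\bigl(\overline{c_0}^{\,2}W^2\bigr)$ contribute $\frac{36}{\pi}\log^2T+o(\log^2T)$ --- is asserted (``I expect'') rather than proved, and nothing in the RWC hypotheses yields it. Your proposed method is the right one and is what the paper executes unconditionally: unfolding reduces the question to $I_\eta=\int_{\mathcal{C}_A}y^{1+\eta}\,\xi^2(1+2Ti)\,E_A^2\,d\mu$ for $\eta\in\{0,\pm2Ti\}$; Mellin inversion together with Ramanujan's identity for $\sum_n\tau_{iT}^2(n)n^{-s}$ turns each $I_\eta$ into a contour integral of ratios of $\xi$-functions; one shifts to $\mathrm{Re}(s)=\frac12$, bounds the shifted integral by $O(A^{1/2}T^{-1/6})$ using a subconvex bound for $\zeta$, and extracts the main term from the triple pole at $s=1$ of the $\eta=0$ integrand, the $\log^2A$ and $\log A\log T$ pieces of its residue being absorbed in the error for slowly growing $A$. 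Since this Mellin/residue computation, not the RWC, is the source of the second $\frac{36}{\pi}\log^2T$, the proposal as written establishes only the first $\frac{36}{\pi}\log^2 T$ and leaves the theorem's actual content unproved.
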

\noindent We have already explained above why (\ref{theconj}) should be expected for $p=4$ and $\Omega=X$, even though for general $p$ we must restrict to compact sets. The other possibility $\Omega=\mathcal{D}_A$ is already included in the RWC when $A$ is fixed. But it is reasonable to conjecture that some effective error term will exist in (\ref{theconj}), so that taking $A$ which grows arbitrarily slowly should be permissible.

Both of our main results are based on careful calculations arising from the regularized inner product. The point is to offer a new viewpoint for the fourth moment and carefully put into place all leading constants, so that the relevant conjecture might be verified in the future using the theory of $L$-functions.

\section{Eisenstein series}

We recall the definition of Eisenstein series
$$
E(z, s) = \sum_{\gamma \in \Gamma_{\infty} \backslash \Gamma} {\rm Im}( \gamma z)^s =\frac{1}{2} y^s \sum_{\substack{ c, d \in \mathbb{Z} \\ (c, d)=1 }} \frac{1}{|c z +d|^{2s}},  \qquad z \in \mathbb{H},
$$
where  $\Gamma_{\infty}$ is the stabilizer of the cusp $\infty$ in $\Gamma$. The series is absolutely convergent in the half-plane ${\rm Re}(s)>1$ where it defines an automorphic function satisfying $\Delta E(z, s)=s(1-s) E(z, s)$, for the hyperbolic Laplacian $\Delta=-y^2(\frac{\partial^2}{\partial x^2} +\frac{\partial^2}{\partial y^2})$. 

\smallskip

The Eisenstein series can be meromorphically  continued to the whole $s$-plane and  $E(z, s)$  has the following Fourier expansion (for $s \neq 0, \frac{1}{2}, 1$)
$$
E\left(z, s \right)=y^{s} + \varphi(s) y^{1-s}  + \frac{2}{\xi(2s)}  \sum_{n \neq 0} \tau_{s-1/2}(|n|) \sqrt{ y}  K_{s-1/2}(2 \pi |n| y) e(n x).
$$
Here for complex $\alpha$, $\tau_{\alpha}(n)=\sum_{ab=n} (a/b)^{\alpha}$ is the generalized divisor sum and the scattering function $\varphi(s)$ can be explicitly expressed as
$$
\varphi(s)=\frac{\xi(2s-1)}{\xi(2s)}, \quad \text{where} \quad \xi(s)=\pi^{-s/2} \Gamma\left(\frac{s}{2} \right) \zeta(s).
$$
We will denote with $e(y, s):=y^{s} + \varphi(s) y^{1-s}$ the constant term of the Eisenstein series.

\smallskip

We denote with $\mathcal{D}=\{z \in \mathbb{H} \; | \; |z| \ge 1, |x| \le \frac{1}{2} \}$ the standard fundamental domain for $\Gamma \backslash \mathbb{H}$ and recall that its volume with respect to $d \mu$ is $\rm{vol}(X)= \rm{vol}(\mathcal{D})=\frac{\pi}{3}$. 

For a parameter $A >1$ we denote with $\mathcal{D}_A:=\{ z \in \mathcal{D} \; | \; {\rm Im}(z) \le A \}$ the corresponding truncated domain and with $\mathcal{C}_A= \mathcal{D} - \mathcal{D}_A$ the corresponding cuspidal region.

The \emph{truncated Eisenstein series} 
$$
E_A(z, s) = \left\{ \begin{array}{lr}
E(z, s), & z \in \mathcal{D}_A \\
E(z, s) -e(y, s), &  z \in \mathcal{C}_A
\end{array}
\right.
$$
is now rapidly decreasing in the cusp. Calculation of the $L^2$-norm of this truncated Eisenstein series is done in \cite{Sp}, Section 2.3, both in the case of the whole fundamental domain and in the case of the cuspidal region, as follows:
\begin{align}
\nonumber \int_{\mathcal{D}} |E_A(z, \frac{1}{2} + iT)|^2 d\mu z &=- \frac{\varphi'}{\varphi}(\frac{1}{2} +iT) + 2 \log A + \frac{A^{2 i T} \varphi(\frac{1}{2}  - iT) - A^{-2 i T} \varphi(\frac{1}{2}  + iT)}{2 i T}\\
\nonumber &=2 \log T + 2 \log A + O( (\log T)^{2/3 + \epsilon})\\
\label{2ndMoment_E_A_in_C_A} \int_{\mathcal{C}_A} |E_A(z, \frac{1}{2} + iT)|^2 d\mu z &= \frac{6}{A \pi} \log T + O(A^{-1}(\log T)^{2/3 + \epsilon}) +O(A^{-1}\log A), \quad T \rightarrow \infty.
\end{align}
Therefore in the compact truncated domain $\mathcal{D}_A$, since $E=E_A$ on $\mathcal{D}_A$, we have as $T \rightarrow \infty$
$$
\int_{\mathcal{D}_A} |E(z, \frac{1}{2} + iT)|^2 d\mu z \sim \left( \frac{\pi}{3} - \frac{1}{A}\right) \frac{6 \log T}{\pi} = {\rm vol}(\mathcal{D}_A) \frac{2 \log T}{{\rm vol}(X)},
$$
as long as $1<A\ll \log T$ say.
In other words, if we normalize the Eisenstein series as $$\tilde{E}(z, \frac{1}{2} + iT):=\frac{E(z, \frac{1}{2} + iT)}{\sqrt{2 \log T}},$$ we have
\begin{equation} \label{normalized2nd_D_A}
\lim_{T \rightarrow \infty } \frac{1}{{\rm vol}(\mathcal{D}_A)} \int_{\mathcal{D}_A} |\tilde{E}(z, \frac{1}{2} + iT)|^2 d\mu z= \frac{1}{{\rm vol}(X)}.
\end{equation}

If we denote $e^{i \theta(T)}:=\frac{\xi(1 + 2 i T)}{|\xi(1 + 2 i T)|}$, then the function $e^{i \theta(T)} \tilde{E}(z, \frac{1}{2} + iT)$ is real-valued, and the Random Wave Conjecture, as extended in \cite{HR}, predicts that $e^{i \theta(T)} \tilde{E}(z, \frac{1}{2} + iT)$ tends to Gaussian $\mathcal{N}(0, {\rm vol}(X)^{-1/2} )$ in distribution, when restricted to any compact and sufficiently regular subset $\Omega \subset X$. In particular, for the fourth moment ($c_4=3$), the conjecture predicts
$$
\lim_{T \rightarrow \infty } \frac{1}{{\rm vol}(\Omega)} \int_{\Omega} |\tilde{E}(z, \frac{1}{2} + iT)|^4 d\mu z=\frac{3}{{\rm vol}(X)^2}.
$$

\smallskip

By heuristic considerations and numerical experiments in \cite{HR}, the same limits should hold also for the normalized truncated Eisenstein series $$\tilde{E}_A(z, \frac{1}{2} + iT):=\frac{E_A(z, \frac{1}{2} + iT)}{\sqrt{2 \log T}}.$$ As explained, this should also include the case $\Omega=X$, in which case the conjecture is 
\begin{equation} \label{RWC_for_truncatedEisen}
\int_{X} |{E}_A(z, \frac{1}{2} + iT)|^4 d\mu z \sim \frac{36}{\pi} \log^2 T,   \qquad \text{as} \quad  T \rightarrow \infty.
\end{equation}

\section{Regularized inner product and regularized Plancherel formula}

We will make use of the regularization process given by Zagier in \cite{Za}. An adelic version with a representation theoretic interpretation and with an alternate way of defining regularization is recently given in \cite{MV}. 

Let $F(z)$ be a continuous $\Gamma$-invariant function on $\mathbb{H}$. It is called \emph{renormalizable} (in Zagier's terminology, or \emph{of controlled increase} in the terminology of \cite{MV}) if there is a function $\Phi(y)$ on $\mathbb{R}_{>0}$ of the form
\begin{equation} \label{Phi_def}
\Phi(y)=\sum_{j=1}^l \frac{c_j}{n_j!} y^{\alpha_j} \log^{n_j} y,
\end{equation}
with $c_j, \alpha_j \in \mathbb{C}$ and $n_j \in \mathbb{Z}_{\ge 0}$, such that
$$
F(z)= \Phi(y) + O(y^{-N})
$$
as $y \rightarrow \infty$, and for any $N>0$.

If $F(z)=\sum_{n= - \infty}^{\infty} a_n(y) e(n x)$ is the Fourier expansion of $F$ at the cusp $\infty$, in particular if $a_0(y)$ is its 0-term, and if no $\alpha_j$ equals 0 or 1, then the function
$$
R(F, s):=\int_0^{\infty} (a_0(y) - \Phi(y) )  y^{s-2} dy,
$$
where the defining integral converges for sufficiently large ${\rm Re}(s)$, can be meromorphically continued to all $s$ and has a simple pole at $s=1$. Then one can define the regularized integral with
\begin{equation} \label{reg_first_by_R(F,s)}
\int_{\Gamma \backslash \mathbb{H}}^{reg} F(z) d\mu(z) := \frac{\pi}{3}  {\rm Res}_{s=1} R(F, s).
\end{equation}
Moreover,  then the function $F(z)E(z, s)$ with $s\neq 0, 1$ is also renormalizable and in particular,  it can be shown that
$$
\int_{\Gamma \backslash \mathbb{H}}^{reg} F(z) E(z, s) d\mu(z) = R(F, s).
$$

It can be shown (see \cite{Za}) that the regularized integral can be written  also as
\begin{equation} \label{reg_int_area}
\int_{\Gamma \backslash \mathbb{H}}^{reg} F(z) d\mu(z)= \int_{\mathcal{D}_A} F(z)d \mu(z) +\int_{\mathcal{C}_A} (F(z) - \Phi(y))d \mu(z) - \hat{\Phi}(A),
\end{equation}
where the right-hand side is independent of the value of the parameter $A >1$ and $\hat{\Phi}(y)$ is in the case  $\alpha_j \neq 1$ for all $j$, given by the following explicit expression
$$
\hat{\Phi}(y)=\sum_{j=1}^l c_j \, \frac{y^{\alpha_j -1}}{\alpha_j -1} \, \sum_{m=0}^{n_j} \frac{ \log^m y}{m! (1- \alpha_j )^{n_j-m}}.
$$

Under the assumption that no $\alpha_j=1$, let $\mathcal{E}_{\Phi}(z)$ denote a linear combination of Eisenstein series $E(z, \alpha_j)$ (or suitable derivatives thereof) corresponding to all the exponents in (\ref{Phi_def}) with ${\rm Re}(\alpha_j) > 1/2$, i.e. such that $F(z) - \mathcal{E}_{\Phi}(z)=O(y^{1/2})$. Then the third, equivalent definition of regularization is given by
\begin{equation} \label{reg_subtract_Eisen}
\int_{\Gamma \backslash \mathbb{H}}^{reg} F(z) d\mu(z)=\int_{\Gamma \backslash \mathbb{H}} (F(z) -\mathcal{E}_{\Phi}(z)) d\mu(z).
\end{equation}

For example Zagier showed in \cite{Za} that for $s_1,s_2 \in \mathbb{C} \setminus \{ 0, 1\}$, $s_1 \neq s_2, 1-s_2$, we have 
\begin{equation} \label{2eisen_orthog}
\int_{\Gamma \backslash \mathbb{H}}^{reg} E(z, s_1) E(z, s_2)  d\mu(z)=0.
\end{equation}

On the other hand, for the regularized  product of the three Eisenstein series, Zagier (ibid. pg 431) obtained
\begin{equation} \label{3eisen}
\int_{\Gamma \backslash \mathbb{H}}^{reg} E(z, \frac{1}{2} + s_1) E(z, \frac{1}{2} + s_2) E(z, \frac{1}{2} + s_3)  d\mu(z)= 
\end{equation}
$$
=\frac{\xi(\frac{1}{2} +s_1 +s_2 +s_3) \xi(\frac{1}{2} +s_1 -s_2 +s_3) \xi(\frac{1}{2} +s_1 +s_2 -s_3) \xi(\frac{1}{2} +s_1 -s_2 -s_3)}{\xi(1+ 2 s_1) \xi(1+ 2 s_2) \xi(1+ 2 s_3)}.
$$
The right-hand side is of course symmetric in $s_1, s_2, s_3$ because of the functional equation $\xi(1-s)=\xi(s)$.

\smallskip

Since we are interested in the regularized product of 4 Eisenstein series, one can try to apply the definition (\ref{reg_first_by_R(F,s)}) directly. But already Zagier in \cite{Za}, pg. 431, discussed that in this case there is no useful closed-form expression for the result, as is  for the product of 3 Eisenstein series in (\ref{3eisen}). Therefore, we must proceed indirectly via a regularized Plancherel formula.

\smallskip

Now, let $G(z)$ be another renormalizable $\Gamma$-invariant function such that $G(z)=\Psi(y) + O(y^{-N})$ as $y \rightarrow \infty$ for any $N>0$, where $\Psi(y)=\sum_{k=1}^{l_1} \frac{d_k}{m_k!} y^{\beta_k} \log^{m_k}y$ with $d_k, \beta_k \in \mathbb{C}$. Then the product $F(z) \overline{G(z)}$ is also a renormalizable $\Gamma$-invariant function and
if $\alpha_j+ \overline{\beta_k} \neq 1$, for all $\alpha_j$ and $\beta_k$ appearing in $\Phi$ and $\Psi$ respectively,    the regularized inner product of  $F$ and $G$ can be defined as 
$$
\langle F, G \rangle_{reg} :=\int_{\Gamma \backslash \mathbb{H}}^{reg} F(z) \overline{G(z)} d\mu(z)=\int_{\Gamma \backslash \mathbb{H}} (F(z) \overline{G(z)} - \mathcal{E}_{\Phi \overline{\Psi}}(z)) d\mu(z).
$$
It is easy to see from (\ref{reg_int_area}) that this regularized product is a Hermitian form.

\smallskip

The regularized Plancherel formula from \cite{MV} is much more general, but for our purposes we will state and derive it entirely in classical situation of Zagier's paper \cite{Za}, much in the spirit of Lemma 4.1 from \cite{Yo}. 
Because of the cumbersome formulas, we will use the shorthand notation $E_s(z):=E(z, \frac{1}{2} +s)$, and   remind the reader not to confuse this with  the truncated Eisenstein series $E_A(z, s)$ which still have 2 arguments. 

\begin{prop} [\cite{MV}] \label{Prop_Regular_Planch} Let $F(z)$ and $G(z)$ be renormalizable functions on $\Gamma \backslash \mathbb{H}$ such that $F - \Phi$ and $G - \Psi$ are of rapid decay as $y \rightarrow \infty$, for some $\Phi(y)=\sum_{j=1}^l \frac{c_j}{n_j!} y^{\alpha_j} \log^{n_j} y$ and $\Psi(y)=\sum_{k=1}^{l_1} \frac{d_k}{m_k!} y^{\beta_k} \log^{m_k}y$. Moreover, let $\alpha_j \neq 1$, $\beta_k \neq 1$, ${\rm Re}(\alpha_j) \neq \frac{1}{2}$, ${\rm Re}(\beta_k) \neq \frac{1}{2}$, $\alpha_j + \overline{\beta_k} \neq 1$ and $\alpha_j \neq \overline{\beta_k}$, for all $j, k$. Then the following formula holds:
$$
\langle F(z), G(z) \rangle_{reg}=
$$
$$
=\langle F, \sqrt{3/ \pi}  \rangle_{reg} \langle  \sqrt{3/ \pi} , G  \rangle_{reg} +   \sum_j \langle F , u_j \rangle \langle  u_j, G \rangle  + \frac{1}{4 \pi} \int_{-\infty}^{\infty} \langle F, E_{it}\rangle_{reg}  \langle  E_{it}, G \rangle_{reg} dt
$$
$$
+ \langle F, \mathcal{E}_{\Psi}  \rangle_{reg}  +  \langle  \mathcal{E}_{\Phi}, G \rangle_{reg}.
$$

\end{prop}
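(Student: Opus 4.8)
The plan is to reduce the statement to the ordinary $L^2$ Plancherel (spectral decomposition) theorem on $\Gamma\backslash\mathbb{H}$ by peeling off the Eisenstein series that account for the growth of $F$ and $G$. Set $f:=F-\mathcal{E}_{\Phi}$ and $g:=G-\mathcal{E}_{\Psi}$. By the construction of $\mathcal{E}_{\Phi},\mathcal{E}_{\Psi}$ in (\ref{reg_subtract_Eisen}), together with the hypotheses ${\rm Re}(\alpha_j)\neq\frac12$ and ${\rm Re}(\beta_k)\neq\frac12$, every exponent surviving in $f$ and $g$ has real part strictly less than $\frac12$; since $\int^{\infty} y^{2\sigma}\,\frac{dy}{y^2}$ converges precisely when $\sigma<\frac12$, this makes $f,g\in L^2(\Gamma\backslash\mathbb{H})$. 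To these genuine $L^2$ functions I apply the classical spectral decomposition,
$$\langle f,g\rangle=\langle f,\sqrt{3/\pi}\rangle\,\langle\sqrt{3/\pi},g\rangle+\sum_j\langle f,u_j\rangle\,\langle u_j,g\rangle+\frac{1}{4\pi}\int_{-\infty}^{\infty}\langle f,E_{it}\rangle\,\langle E_{it},g\rangle\,dt,$$
where $\sqrt{3/\pi}={\rm vol}(X)^{-1/2}$ is the $L^2$-normalized constant eigenfunction.

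Next I would expand the regularized form bilinearly. Writing $F=f+\mathcal{E}_{\Phi}$ and $G=g+\mathcal{E}_{\Psi}$ and using that the regularized product is Hermitian, I get
$$\langle F,G\rangle_{reg}=\langle f,g\rangle_{reg}+\langle f,\mathcal{E}_{\Psi}\rangle_{reg}+\langle\mathcal{E}_{\Phi},g\rangle_{reg}+\langle\mathcal{E}_{\Phi},\mathcal{E}_{\Psi}\rangle_{reg}.$$
Because $f,g$ are square-integrable, $\langle f,g\rangle_{reg}=\langle f,g\rangle$. The cross term $\langle\mathcal{E}_{\Phi},\mathcal{E}_{\Psi}\rangle_{reg}$ vanishes: it is a finite combination of $\langle E(\cdot,\alpha_j),E(\cdot,\beta_k)\rangle_{reg}$, and since $\overline{E(z,\beta_k)}=E(z,\overline{\beta_k})$, the orthogonality (\ref{2eisen_orthog}) applies under the genericity hypotheses $\alpha_j\neq\overline{\beta_k}$ and $\alpha_j+\overline{\beta_k}\neq1$. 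Re-expressing $\langle f,\mathcal{E}_{\Psi}\rangle_{reg}$ and $\langle\mathcal{E}_{\Phi},g\rangle_{reg}$ back in terms of $F$ and $G$ (and again using $\langle\mathcal{E}_{\Phi},\mathcal{E}_{\Psi}\rangle_{reg}=0$) then gives the clean identity $\langle F,G\rangle_{reg}=\langle f,g\rangle+\langle F,\mathcal{E}_{\Psi}\rangle_{reg}+\langle\mathcal{E}_{\Phi},G\rangle_{reg}$, which already produces the last two terms of the proposition.

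It remains to identify the three spectral inner products of $f$ with those of $F$. For a cusp form, unfolding gives $\langle\mathcal{E}_{\Phi},u_j\rangle=0$, so $\langle f,u_j\rangle=\langle F,u_j\rangle$ (an honestly convergent integral since $u_j$ is rapidly decaying). For the continuous spectrum, $E_{it}=E(z,\frac12+it)$ has ${\rm Re}=\frac12$, so $\langle\mathcal{E}_{\Phi},E_{it}\rangle_{reg}=0$ by (\ref{2eisen_orthog}) again (here ${\rm Re}(\alpha_j)>\frac12$ guarantees $\alpha_j\neq\frac12\mp it$), whence $\langle f,E_{it}\rangle=\langle F,E_{it}\rangle_{reg}$ and likewise on the $G$-side. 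For the constant eigenfunction I would write $1=\frac{\pi}{3}\,{\rm Res}_{s=1}E(z,s)$, so that $\langle\mathcal{E}_{\Phi},\sqrt{3/\pi}\rangle_{reg}$ is proportional to ${\rm Res}_{s=1}\langle\mathcal{E}_{\Phi},E(\cdot,\overline{s})\rangle_{reg}$; but $\langle\mathcal{E}_{\Phi},E(\cdot,\overline{s})\rangle_{reg}$ vanishes identically for $s$ in a punctured neighborhood of $1$ by (\ref{2eisen_orthog}), so its residue is $0$ and $\langle f,\sqrt{3/\pi}\rangle=\langle F,\sqrt{3/\pi}\rangle_{reg}$. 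Substituting these three identifications into the classical decomposition above yields exactly the asserted formula.

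The routine steps are the bilinear bookkeeping and the orthogonality cancellations, all powered by (\ref{2eisen_orthog}). The genuine obstacle is the analytic justification underpinning them: verifying that $f$ and $g$ really land in $L^2$ (which is exactly where the hypotheses ${\rm Re}(\alpha_j),{\rm Re}(\beta_k)\neq\frac12$ enter), confirming that the regularized pairings $\langle f,E_{it}\rangle_{reg}$ coincide with the generalized-eigenfunction pairings $\langle f,E_{it}\rangle$ appearing in the classical Plancherel formula despite $E_{it}$ itself failing to be $L^2$, and making the residue argument for the constant term rigorous so that no spurious pole contaminates the leading term. The many genericity conditions in the hypotheses are precisely what is needed to keep every one of these orthogonality and pole statements valid simultaneously.
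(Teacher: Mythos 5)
Your proposal is correct and takes essentially the same approach as the paper's proof: subtract $\mathcal{E}_{\Phi}$ and $\mathcal{E}_{\Psi}$ to obtain genuinely $L^2$ functions (using ${\rm Re}(\alpha_j), {\rm Re}(\beta_k) \neq \tfrac{1}{2}$), apply the classical Plancherel formula to them, and then use Zagier's orthogonality (\ref{2eisen_orthog}) to convert each spectral pairing of $f=F-\mathcal{E}_{\Phi}$ back into the corresponding (regularized) pairing of $F$. The only cosmetic difference is the constant-eigenfunction term, where you argue via a residue of $\langle \mathcal{E}_{\Phi}, E(\cdot,\overline{s})\rangle_{reg}$ at $s=1$, whereas the paper obtains $\langle f, 1\rangle = \langle F, 1\rangle_{reg}$ immediately from the definition (\ref{reg_subtract_Eisen}); both are valid.
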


\begin{proof} Because of the assumption ${\rm Re}(\alpha_j)\neq \frac{1}{2}$, ${\rm Re}(\beta_k)\neq \frac{1}{2}$, there exists some $\delta >0$ such that $F_1(z):= F(z) - \mathcal{E}_{\Phi}(z)= O(y^{1/2 - \delta})$ and $G_1(z):= G(z) - \mathcal{E}_{\Psi}(z)= O(y^{1/2 - \delta})$. For these $F_1(z)$ and $G_1(z)$ we have $F_1(z) \overline{G_1(z)} \in L^1(\Gamma \backslash \mathbb{H})$ and hence $\langle F_1, G_1 \rangle_{reg} = \langle F_1, G_1 \rangle$ (the usual Petersson inner product), while also  $F_1(z), G_1(z) \in L^2(\Gamma \backslash \mathbb{H})$ and hence one can apply the usual Plancherel formula for $\langle F_1(z), G_1(z) \rangle$, obtaining
$$
\langle F(z), G(z) \rangle_{reg}=\langle F_1(z) + \mathcal{E}_{\Phi}(z), G_1(z) +\mathcal{E}_{\Psi}(z) \rangle_{reg}=
$$
$$
=\langle F_1, \sqrt{3/ \pi}  \rangle \langle  \sqrt{3/ \pi} , G_1  \rangle +   \sum_j \langle F_1 , u_j \rangle \langle  u_j, G_1 \rangle  + \frac{1}{4 \pi} \int_{-\infty}^{\infty} \langle F_1, E_{it}\rangle  \langle  E_{it}, G_1\rangle dt
$$
$$
+ \langle F_1, \mathcal{E}_{\Psi}  \rangle_{reg}  +  \langle  \mathcal{E}_{\Phi}, G_1 \rangle_{reg}  + \langle \mathcal{E}_{\Phi}, \mathcal{E}_{\Psi} \rangle_{reg}.   
$$
Under our restrictions  on the parameters $\alpha_j, \beta_k$, all the inner products  on the  right hand side  are well-defined and moreover because of (\ref{2eisen_orthog}), we have that $\langle \mathcal{E}_{\Phi}, \mathcal{E}_{\Psi} \rangle_{reg}=0$, which also implies that $\langle F_1, \mathcal{E}_{\Psi}  \rangle_{reg}=\langle F, \mathcal{E}_{\Psi}  \rangle_{reg}$ and $\langle  \mathcal{E}_{\Phi}, G_1 \rangle_{reg}=\langle  \mathcal{E}_{\Phi}, G \rangle_{reg}$. Furthermore, for the products with cusp forms $u_j$ we have $\langle F_1 , u_j \rangle=\langle F  , u_j \rangle - \langle \mathcal{E}_{\Phi} , u_j \rangle= \langle F  , u_j \rangle$, the product with constant function is $\langle F_1  , 1 \rangle= \langle F - \mathcal{E}_{\Phi}  , 1 \rangle=\langle F  , 1 \rangle_{reg}$ by definition of regularization and $\langle F_1  , E_{it} \rangle=\langle F - \mathcal{E}_{\Phi}  , E_{it} \rangle_{reg}=\langle F  , E_{it} \rangle_{reg}$ since $\langle \mathcal{E}_{\Phi}, E_{it} \rangle_{reg}=0$, by (\ref{2eisen_orthog}). This finishes the proof.

\end{proof}

\section{Proof of Theorem \ref{MainResult}}

We want to apply this formula for the product of four Eisenstein series. By calculating the constant term of $F(z):=E(z, \frac{1}{2} + s_1) E(z, \frac{1}{2} + s_2)$, we find that
$$
\mathcal{E}_{\Phi}(z)=E(z, 1+ s_1+s_2) +c_1 E(z, 1-s_1 +s_2) + c_2 E(z, 1+s_1 -s_2) +c_1 c_2 E(z, 1-s_1 -s_2)
$$
where
$$
c_j=\varphi \left(\frac{1}{2} +s_j \right) =\frac{\xi(2 s_j)}{\xi(1+2s_j)},
$$
and we have the similar formula for $\mathcal{E}_{\Psi}(z)$ corresponding to $G(z):=E(z, \frac{1}{2} + s_3) E(z, \frac{1}{2} + s_4)$.
Hence, under the conditions on the parameters $s_j$, $1\le j \le 4$, described in Proposition \ref{Prop_Regular_Planch} ($\alpha_j=1 \pm s_1 \pm s_2, \; \beta_k=1 \pm s_3 \pm s_4$), we get
\begin{equation} \label{eq4reg}
\langle E_{s_1} E_{s_2}, \; E_{s_3} E_{s_4} \rangle_{reg}=\left\langle E(z, \frac{1}{2} + s_1) E(z, \frac{1}{2} + s_2), \; E(z, \frac{1}{2} + s_3) E(z, \frac{1}{2} + s_4) \right\rangle_{reg}=
\end{equation}
$$
=\frac{3}{\pi} \langle E_{ s_1} E_{s_2}, \; 1  \rangle_{reg} \langle  1 , \; E_{s_3} E_{s_4}  \rangle_{reg} 
+   \sum_{j \ge 1} \langle E_{ s_1} E_{s_2} , u_j \rangle \;  \langle  u_j, E_{ s_3} E_{s_4} \rangle  
$$
$$
+ \frac{1}{4 \pi} \int_{-\infty}^{\infty} \langle E_{ s_1} E_{s_2}, \; E_{it}\rangle_{reg} \; \langle  E_{it}, \; E_{ s_3} E_{s_4} \rangle_{reg} \, dt
$$
$$
+ \langle E_{ s_1} E_{s_2}, \; E_{\frac{1}{2} +s_3 +s_4} + c_3 E_{\frac{1}{2} -s_3 +s_4} +c_4 E_{\frac{1}{2} +s_3 -s_4} +c_3 c_4 E_{\frac{1}{2} -s_3 -s_4} \rangle_{reg}  $$
$$
+  \langle  E_{\frac{1}{2} +s_1 +s_2} + c_1 E_{\frac{1}{2} -s_1 +s_2} +c_2 E_{\frac{1}{2} +s_1 -s_2} +c_1 c_2 E_{\frac{1}{2} -s_1 -s_2}, \; E_{ s_3} E_{s_4} \rangle_{reg}. 
$$

\smallskip

For the cusp forms $u_j$, the triple products $\langle E_{ s_1} E_{s_2} , u_j \rangle$ can be evaluated by the standard unfolding argument (see Section 2 of \cite{LS}):

\begin{lem} Let $u_j(z)$ be a Hecke-Maass cusp form for the group $\Gamma$, that is an eigenvalue of the Laplace operator $\Delta u_j= (\frac{1}{4} + t_j^2) u_j$ and of all Hecke operators $T_n u_j=\lambda_j(n) u_j$, for all $n\ge 1$, which satisfies also $T_{-1} u_j(z)=\overline{u_j(-\overline{z})}=\epsilon_j u_j(z)$, $\epsilon_j=\pm 1$. Then it has the Fourier expansion $u_j(z)=\rho_j(1) \sum_{n \neq 0} \lambda_j(n) \sqrt{y} K_{it_j}(2 \pi |n| y)  e(nx)$ with $\lambda_j(-n)= \epsilon_j \lambda_j(n)$. Let $L(s, u_j)$ be the $L$-function associated to $u_j$, defined by analytic continuation from the Dirichlet series $\sum_{n \ge 1} \lambda_j(n)n^{-s}$. Then for $s_1, s_2 \neq \pm 1/2$, if $u_j$ is even ($\epsilon_j=1$) we have
\begin{equation} \label{tripleEEu}
\langle E(\cdot, 1/2 + s_1) E(\cdot, 1/2 +s_2) , u_j \rangle=\frac{\overline{\rho_j(1)}}{2} \frac{\Lambda(\frac{1}{2} + s_1 +s_2, u_j) \Lambda(\frac{1}{2} + s_1 - s_2, u_j)}{\xi(1+2s_1) \xi(1+2s_2)},
\end{equation}
where $\Lambda(s, u_j):=\pi^{-s} \Gamma(\frac{s+i t_j}{2}) \Gamma(\frac{s-i t_j}{2}) L(s, u_j)$ is the completed $L$-function corresponding to $u_j$. In the case of odd $u_j$ (i.e. $\epsilon_j=-1$), the triple product is $0$.
\end{lem}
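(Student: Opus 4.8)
The plan is to evaluate this triple product by the classical unfolding method, exploiting that $u_j$ is a cusp form so that the integral converges absolutely (the two Eisenstein factors grow only polynomially at the cusp while $u_j$ decays exponentially) and therefore equals the ordinary Petersson inner product. First I would take $\mathrm{Re}(s_1)$ large enough that the defining series $E(z,\tfrac12+s_1)=\sum_{\gamma\in\Gamma_\infty\backslash\Gamma}\mathrm{Im}(\gamma z)^{1/2+s_1}$ converges absolutely, insert it, and unfold against the $\Gamma$-invariant function $E(z,\tfrac12+s_2)\overline{u_j(z)}$. This collapses the integral over $\Gamma\backslash\mathbb{H}$ to the integral over the strip $\Gamma_\infty\backslash\mathbb{H}=\{0\le x<1,\ y>0\}$ of $y^{1/2+s_1}E(z,\tfrac12+s_2)\overline{u_j(z)}\,d\mu(z)$.

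Next I would substitute the Fourier expansions of $E(z,\tfrac12+s_2)$ and of $u_j$ recalled above and perform the $x$-integration. Orthogonality of the additive characters $e(nx)$ kills the constant term $y^{1/2+s_2}+\varphi(\tfrac12+s_2)y^{1/2-s_2}$ of the Eisenstein series, since $u_j$ has no zeroth Fourier coefficient, and pairs the $n$th coefficient of $E$ against that of $u_j$. At this point the sign of the form enters: because $\tau_{s_2}(|n|)$ and the Bessel factors depend only on $|n|$ while $\lambda_j(-n)=\epsilon_j\lambda_j(n)$, the contributions of $n$ and $-n$ cancel when $\epsilon_j=-1$, giving $0$ for odd forms, and reinforce when $\epsilon_j=1$. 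In the even case I am left, after the substitution $u=2\pi|n|y$, with the product of a Dirichlet series $\sum_{n\ge1}\lambda_j(n)\tau_{s_2}(n)n^{-(s_1+1/2)}$ and the integral $\int_0^\infty u^{s_1-1/2}K_{s_2}(u)K_{it_j}(u)\,du$.

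The two remaining ingredients are standard. The Bessel integral is evaluated by the classical formula for the Mellin transform of a product of two $K$-Bessel functions, producing the four Gamma factors $\Gamma\!\big(\tfrac{1/2+s_1\pm s_2\pm it_j}{2}\big)$ over $\Gamma(\tfrac12+s_1)$, with a power of $2$; the Dirichlet series is summed by the Ramanujan-type Hecke identity $\sum_{n\ge1}\lambda_j(n)\tau_{s_2}(n)n^{-w}=L(w+s_2,u_j)L(w-s_2,u_j)/\zeta(2w)$ with $w=\tfrac12+s_1$, which one verifies prime by prime from the Hecke relations. These four Gamma factors are precisely those completing $L(\tfrac12+s_1+s_2,u_j)L(\tfrac12+s_1-s_2,u_j)$ into $\Lambda(\tfrac12+s_1+s_2,u_j)\Lambda(\tfrac12+s_1-s_2,u_j)$, while the surviving $\Gamma(\tfrac12+s_1)$, $\zeta(1+2s_1)$ and the accumulated powers of $\pi$ assemble into $\xi(1+2s_1)$; the factor $\xi(1+2s_2)$ descends directly from the normalization $2/\xi(1+2s_2)$ in the Fourier expansion of the second Eisenstein series.

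I expect the only real difficulty to be the bookkeeping: tracking the powers of $2$ and $\pi$ through the Bessel evaluation and checking that the archimedean and non-archimedean factors reorganize exactly into the completed objects $\Lambda$ and $\xi$, leaving the clean constant $\overline{\rho_j(1)}/2$. Finally, since the identity is first derived for $\mathrm{Re}(s_1)$ large, I would extend it to all $s_1,s_2\ne\pm\tfrac12$ by analytic continuation, both sides being meromorphic in $s_1,s_2$.
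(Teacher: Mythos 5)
Your proposal is correct and is precisely the argument the paper has in mind: the paper proves this lemma by citing "the standard unfolding argument (see Section 2 of [LS])", which is exactly the unfolding, Fourier-expansion pairing, Mellin--Barnes evaluation of the $K$-Bessel integral (the same formula, GR 6.576.4, the paper uses later), and Ramanujan--Hecke identity that you carry out, with the parity discussion and final analytic continuation handled the same way. The constants check out: the factor $4\cdot 2^{s_1-5/2}/(2\pi)^{s_1+1/2}=\tfrac{1}{2}\pi^{-1/2-s_1}$ combines with $\Gamma(\tfrac12+s_1)\zeta(1+2s_1)$ and the four Gamma factors to reassemble into $\overline{\rho_j(1)}\,\Lambda\Lambda/\bigl(2\,\xi(1+2s_1)\xi(1+2s_2)\bigr)$ as claimed.
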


{\em Remark:} The right hand side in (\ref{tripleEEu}) is symmetric in $s_1, s_2$, since for even $u_j$, we have the functional equation $\Lambda(s, u_j)=\Lambda(1-s, u_j)$. Moreover, we have the following formula relating the normalizing factor $\rho_j(1)$ with the symmetric square $L$-function:
$$
|\rho_j(1)|^2 = \frac{2 \cosh(\pi t_j)}{L(1, \text{sym}^2 u_j)}.
$$

\smallskip
Further,  for $s_1 \neq \pm s_2$ and $s_3 \neq \pm s_4$, by (\ref{2eisen_orthog}) the first term on the right-hand side in (\ref{eq4reg}) vanishes. Finally, using (\ref{3eisen}) for all regularized triple products of Eisenstein series, we arrive at
\begin{equation} \label{4Eisen_general_s_j}
\langle E_{s_1} E_{s_2}, \; E_{s_3} E_{s_4} \rangle_{reg}
\end{equation}
$$
=\sum_{\substack{j\ge 1 \\ \epsilon_j=1}} \frac{\cosh(\pi t_j)}{2}  \frac{\Lambda(\frac{1}{2} + s_1 +s_2, u_j) \Lambda(\frac{1}{2} + s_1 - s_2, u_j)  \Lambda(\frac{1}{2} + \overline{s}_3 +\overline{s}_4, u_j) \Lambda(\frac{1}{2} + \overline{s}_3 - \overline{s}_4, u_j)}{L(1, \text{sym}^2 u_j)\xi(1+2s_1) \xi(1+2s_2) \xi(1+2\overline{s}_3) \xi(1+2 \overline{s}_4)}
$$
$$
+\frac{1}{4 \pi} \int_{-\infty}^{\infty} \frac{\prod_{\delta_1, \delta_2 \in \{ \pm 1 \}}   \xi(\frac{1}{2} +t  i  +\delta_1 s_1 + \delta_2 s_2) \xi(\frac{1}{2} +t  i  +\delta_1 \overline{s}_3 + \delta_2 \overline{s}_4)}{|\xi(1+2ti)|^2  \xi(1+ 2s_1) \xi(1+ 2s_2)\xi(1+ 2\overline{s}_3)  \xi(1+ 2\overline{s}_4)}  dt
$$
$$
+\frac{\prod_{\delta_1, \delta_2 \in \{ \pm 1 \} }  \xi(1+\delta_1 s_1 +\delta_2 s_2 +\overline{s}_3 + \overline{s}_4 )}{\xi(1+2 s_1) \xi(1+2 s_2) \xi(2+2 \overline{s}_3 + 2\overline{s}_4)}  +\overline{c}_3 \frac{\prod_{\delta_1, \delta_2 \in \{ \pm 1 \} }  \xi(1+\delta_1 s_1 +\delta_2 s_2 -\overline{s}_3 + \overline{s}_4 )}{\xi(1+2 s_1) \xi(1+2 s_2) \xi(2-2 \overline{s}_3 + 2\overline{s}_4)}
$$
$$
+\overline{c}_4 \frac{\prod_{\delta_1, \delta_2 \in \{ \pm 1 \} }  \xi(1+\delta_1 s_1 +\delta_2 s_2 +\overline{s}_3 - \overline{s}_4 )}{\xi(1+2 s_1) \xi(1+2 s_2) \xi(2+2 \overline{s}_3 - 2\overline{s}_4)} +\overline{c}_3 \overline{c}_4 \frac{\prod_{\delta_1, \delta_2 \in \{ \pm 1 \} }  \xi(1+\delta_1 s_1 +\delta_2 s_2 -\overline{s}_3  - \overline{s}_4 )}{\xi(1+2 s_1) \xi(1+2 s_2) \xi(2-2 \overline{s}_3 - 2\overline{s}_4)}
$$
$$
+\frac{\prod_{\delta_1, \delta_2 \in \{ \pm 1 \} }  \xi(1+ s_1 + s_2 +\delta_1 \overline{s}_3 + \delta_2 \overline{s}_4 )}{\xi(2+2 s_1 + 2 s_2)   \xi(1+2 \overline{s}_3) \xi(1+2 \overline{s}_4) } + c_1 \frac{\prod_{\delta_1, \delta_2 \in \{ \pm 1 \} }  \xi(1- s_1 + s_2 +\delta_1 \overline{s}_3 + \delta_2 \overline{s}_4 )}{\xi(2 - 2 s_1 + 2 s_2)   \xi(1+2 \overline{s}_3) \xi(1+2 \overline{s}_4) }
$$
$$
+ c_2 \frac{\prod_{\delta_1, \delta_2 \in \{ \pm 1 \} }  \xi(1+ s_1 - s_2 +\delta_1 \overline{s}_3 + \delta_2 \overline{s}_4 )}{\xi(2+2 s_1 - 2 s_2)   \xi(1+2 \overline{s}_3) \xi(1+2 \overline{s}_4) }  + c_1 c_2 \frac{\prod_{\delta_1, \delta_2 \in \{ \pm 1 \} }  \xi(1 - s_1 - s_2 +\delta_1 \overline{s}_3 + \delta_2 \overline{s}_4 )}{\xi(2 - 2 s_1 - 2 s_2)   \xi(1+2 \overline{s}_3) \xi(1+2 \overline{s}_4) }.
$$

Let us denote the last eight terms (quotients of products of $\xi$-functions, coming from the regularization process) on the right hand side of (\ref{4Eisen_general_s_j}) with $\Xi_j$, $1\le j \le 8$, respectively with the order of appearance in (\ref{4Eisen_general_s_j}).
\smallskip

Now, let us choose for $s_j$ the following values: $s_1=iT$, $s_2=iT +\nu$, $s_3=iT$ and $s_4=iT+\overline{\eta}$, with complex parameters $\nu$ and $\eta$ satisfying $0 < {\rm Re}(\nu) < {\rm Re}(\eta) < \frac{1}{4}$. For these values  all the conditions from Proposition \ref{Prop_Regular_Planch} are satisfied. From (\ref{reg_subtract_Eisen}), we see that $\langle E_{iT} E_{iT +\nu}, \; E_{iT} E_{iT +\overline{\eta}} \rangle_{reg}$ is  continuous in $\nu, \eta$, and therefore, if we first let $\nu \rightarrow 0$ in (\ref{4Eisen_general_s_j}), keeping $\eta$ fixed, we get
\begin{equation} \label{4Eisen_eta}
\langle E_{iT}^2 , \; E_{iT} E_{iT +\overline{\eta}} \rangle_{reg}
\end{equation}
$$
=\sum_{\substack{j\ge 1 \\ \epsilon_j=1}} \frac{\cosh(\pi t_j)}{2}  \frac{\Lambda(\frac{1}{2} + 2Ti, u_j) \Lambda(\frac{1}{2}  -2Ti + \eta, u_j)  \Lambda(\frac{1}{2}, u_j) \Lambda(\frac{1}{2}  - \eta, u_j)}{L(1, \text{sym}^2 u_j) \, \xi^2(1+2Ti) \xi(1- 2Ti)  \xi(1- 2Ti +2 \eta)}
$$
$$
+\frac{1}{4 \pi} \int_{-\infty}^{\infty} \frac{ \xi^2(\frac{1}{2} +t  i)  \prod_{\pm}   \xi(\frac{1}{2} +t  i  \pm 2Ti) \xi(\frac{1}{2} +t  i  \pm 2Ti \mp \eta) \xi(\frac{1}{2} + ti \pm \eta)}{|\xi(1+2ti)|^2  \xi^2(1+ 2Ti) \xi(1- 2Ti)  \xi(1 -2Ti + 2 \eta)}  dt
$$  
$$
+ \sum_{j=1}^8 \Xi_j(T, \eta),
$$
where
\begin{align*}
\Xi_1(T, \eta) &= \frac{\xi(1+ \eta) \xi^2(1 -2Ti + \eta)  \xi(1- 4Ti +\eta)}{\xi^2(1+2 Ti)  \xi(2 -4Ti +2 \eta)} =: \xi(1+\eta)F_1(\eta),  \\
\Xi_2(T, \eta) &= \frac{  \xi^2(1+\eta ) \xi(1+2Ti +\eta) \xi(1-2Ti +\eta)}{\xi(1 -2Ti) \xi(1+2 Ti) \xi(2 + 2 \eta)} =: \xi^2(1+\eta ) F_2(\eta), \\
\Xi_3(T, \eta) &=  \frac{  \xi^2(1- \eta )  \xi(1+2Ti - \eta)  \xi(1+2Ti - 2\eta)  \xi(1-2Ti - \eta)}{\xi^2(1 + 2Ti) \xi(1 -2Ti + 2\eta) \xi(2- 2\eta)} =: \xi^2(1-\eta ) F_3(\eta),      \\
\Xi_4(T, \eta) &=  \frac{\xi(1-\eta) \xi^2(1 +2Ti - \eta) \xi(1 +2Ti -2 \eta) \xi(1 +4Ti - \eta) }{\xi(1+ 2Ti) \xi(1- 2Ti) \xi(1- 2Ti +2\eta) \xi(2 +4Ti -2\eta)} =: \xi(1-\eta ) F_4(\eta),   \\
\Xi_5(T, \eta) &=  \frac{\xi(1+\eta) \xi(1+ 2 Ti -\eta) \xi(1+2Ti +\eta) \xi(1+ 4Ti -\eta) }{\xi(2 +4Ti) \xi(1-2Ti) \xi(1-2Ti + 2 \eta)} =: \xi(1+\eta ) F_5(\eta),      \\
\Xi_6(T, \eta) &= \Xi_7(T, \eta)=   \frac{\xi(1- \eta) \xi(1+\eta) \xi(1+ 2Ti -\eta) \xi(1-2Ti + \eta)}{\xi(2) \xi(1+2Ti) \xi(1-2Ti +2 \eta)} =: \xi(1-\eta) \xi(1+\eta ) F_6(\eta),   \\
\Xi_8(T, \eta) &= \frac{\xi(1-\eta) \xi(1- 2Ti - \eta) \xi(1- 2Ti) \xi(1- 2Ti+ \eta) \xi(1- 4Ti + \eta)}{\xi^2(1 +2Ti) \xi(1- 2Ti +2 \eta)  \xi(2- 4Ti)} =: \xi(1-\eta ) F_8(\eta).
\end{align*}
 
Each of $\Xi_j$ has a pole at $\eta=0$, but the whole sum $\sum_{j=1}^8 \Xi_j$ has a removable singularity at $\eta=0$. This can be seen by grouping together $\Xi_1$ with $\Xi_8$, $\Xi_4$ with $\Xi_5$, and $\Xi_2 + \Xi_3$ with $2\Xi_6=\Xi_6 + \Xi_7$. More explicitly, if we denote with 
\begin{equation} \label{xi_Laurent}
\xi(s)=\frac{1}{s-1}+ a + b(s-1) + O((s-1)^2) 
\end{equation}
the Laurent expansion of $\xi(s)$ around $s=1$, we get the following expansions of $\Xi_j(\eta)$:
\begin{align*}
\Xi_1(\eta) &=     & \frac{F_1(0)}{\eta}   &+  F_1'(0) +a F_1(0)  &+ O(\eta), \\
\Xi_2(\eta) &=   \frac{F_2(0)}{\eta^2}  &+ \frac{F_2'(0)+ 2a F_2(0) }{\eta}   &+  (a^2 +2 b) F_2(0) +2 a F_2'(0) +\frac{1}{2} F_2''(0) &+ O(\eta), \\
\Xi_3(\eta) &=     \frac{F_3(0)}{\eta^2}  &+ \frac{F_3'(0)- 2a F_3(0) }{\eta}   &+  (a^2 +2 b) F_3(0) -2 a F_3'(0) +\frac{1}{2} F_3''(0) &+ O(\eta), \\
\Xi_4(\eta) &=     &  -\frac{F_4(0)}{\eta}   &  -F_4'(0) +a F_4(0)  &+ O(\eta), \\
\Xi_5(\eta) &=     &  \frac{F_5(0)}{\eta}   &+  F_5'(0) +a F_5(0)  &+ O(\eta), \\
\Xi_6(\eta) =\Xi_7(\eta) &=   - \frac{F_6(0)}{\eta^2}  &  - \frac{F_6'(0)}{\eta}    &+ (a^2 -2b) F_6(0) -\frac{1}{2} F_6''(0)   &+ O(\eta), \\
\Xi_8(\eta) &=     &   -\frac{F_8(0)}{\eta}   &  -F_8'(0) +a F_8(0)  &+ O(\eta).
\end{align*}

But, $F_2(0)=F_3(0)=F_6(0)=\frac{1}{\xi(2)}$, so the polar terms with $\frac{1}{\eta^2}$ cancel out in the sum. Further, $F_1(0)=F_8(0)$, $F_4(0)=F_5(0)$ and one calculates
$$
F_2'(0)=\frac{2}{\xi(2)} \left[ {\rm Re} \frac{\xi'}{\xi}(1+2Ti) -\frac{\xi'}{\xi}(2) \right], \qquad F_3'(0)=\frac{2}{\xi(2)} \left[ \frac{\xi'}{\xi}(2)   -3{\rm Re} \frac{\xi'}{\xi}(1+2Ti) \right]
$$
and
$$
F_6'(0)= -\frac{2}{\xi(2)} {\rm Re} \frac{\xi'}{\xi}(1+2Ti),
$$
from which it follows that $F_2'(0) + F_3'(0) -2 F_6'(0)=0$, a.e. the coefficient in front of $\frac{1}{\eta}$ also vanishes.   Therefore we can take $\eta \rightarrow 0$ in (\ref{4Eisen_eta}) and after  calculation of all the other required  derivatives appearing in
$$
\lim_{\eta \rightarrow 0} \sum_{j=1}^8 \Xi_j(T, \eta)=a (F_1(0) + F_4(0) + F_5(0) +F_8(0)) +  a^2 (F_2(0) + F_3(0) + 2F_6(0)) 
$$ 
$$
+F_1'(0) -F_4'(0) + F_5'(0) -F_8'(0) +2a (F_2'(0) - F_3'(0)) +\frac{1}{2}F_2''(0) +\frac{1}{2}F_3''(0)- F_6''(0),
$$ 
we obtain the following exact evaluation of the regularized fourth power of Eisenstein series:

\begin{prop} For any nonzero real $T$, we have:
\begin{equation} \label{reg4Eisen_exact}
\int_{\Gamma \backslash \mathbb{H}}^{reg} |E(z, 1/2 +iT)|^4 d\mu(z)=\langle E_{iT}^2 , \; E_{iT}^2  \rangle_{reg}
\end{equation}
$$
=\sum_{\substack{j\ge 1 \\ \epsilon_j=1}} \frac{\cosh(\pi t_j)}{2}  \frac{\Lambda(\frac{1}{2} + 2Ti, u_j) \Lambda(\frac{1}{2}  -2Ti , u_j)  \Lambda^2(\frac{1}{2}, u_j) }{L(1, \text{sym}^2 u_j) \, |\xi(1+2Ti)|^4  }
$$
$$
+\frac{1}{4 \pi} \int_{-\infty}^{\infty} \frac{ \xi^4(\frac{1}{2} +t  i)     \xi^2(\frac{1}{2} +t  i  + 2Ti) \xi^2(\frac{1}{2} +t  i  - 2Ti) }{|\xi(1+2ti)|^2  |\xi(1+ 2Ti)|^4   }  dt
$$  
$$
+ \frac{4}{\xi(2)} \left[  {\rm Re} \frac{\xi''}{\xi}(1+2Ti) +2 \left| \frac{\xi'}{\xi}(1+2Ti)  \right|^2  +{\rm Re} \frac{(\xi')^2}{\xi^2}(1+ 2Ti) \right.
$$
$$
\left. + 4(a - \frac{\xi'}{\xi}(2)) {\rm Re} \frac{\xi'}{\xi}(1+2Ti)  +2 \frac{(\xi')^2}{\xi^2}(2) - \frac{\xi''}{\xi}(2) - 2a \frac{\xi'}{\xi}(2) +a^2 \right]
$$
$$
+\frac{\xi^2(1+2Ti) \xi(1+4Ti)}{\xi^2(1-2Ti) \xi(2+4Ti)} \left[ 2a + 4 \frac{\xi'}{\xi}(1+2Ti) -2 \frac{\xi'}{\xi}(2+4Ti) \right]
$$
$$
+\frac{\xi^2(1-2Ti) \xi(1-4Ti)}{\xi^2(1+2Ti) \xi(2-4Ti)} \left[ 2a + 4 \frac{\xi'}{\xi}(1-2Ti) -2 \frac{\xi'}{\xi}(2-4Ti) \right],
$$
where $a=\lim_{s \rightarrow 1} (\xi(s) - (s-1)^{-1})$.
\end{prop}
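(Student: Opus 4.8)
The plan is to derive (\ref{reg4Eisen_exact}) by letting $\eta \to 0$ in the identity (\ref{4Eisen_eta}). First I would check that the left-hand side $\langle E_{iT}^2, E_{iT}E_{iT+\overline{\eta}}\rangle_{reg}$ is continuous in $\eta$ at $\eta = 0$: by the subtraction definition (\ref{reg_subtract_Eisen}) it equals an ordinary convergent integral $\int_{\Gamma\backslash\mathbb{H}}(E_{iT}^2\,\overline{E_{iT}E_{iT+\overline{\eta}}}-\mathcal{E}_{\Phi\overline{\Psi}})\,d\mu$ whose integrand depends holomorphically on $\eta$ and, after the subtraction, is $O(y^{1-2\delta})$ uniformly for $\eta$ near $0$, hence dominated by a fixed $L^1$ function. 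Since $E_{iT}E_{iT+\overline{\eta}} \to E_{iT}^2$ as $\eta \to 0$, the limit of the left side is $\langle E_{iT}^2, E_{iT}^2\rangle_{reg} = \int^{reg}_{\Gamma\backslash\mathbb{H}}|E(z,\tfrac12+iT)|^4\,d\mu(z)$, which is what we want to evaluate. It then remains to pass to the limit $\eta \to 0$ term by term on the right of (\ref{4Eisen_eta}).

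The two spectral terms are the easy part. In the cuspidal sum I would simply set $\eta = 0$: the numerator factors $\Lambda(\tfrac12-2Ti+\eta,u_j)\Lambda(\tfrac12-\eta,u_j)$ tend to $\Lambda(\tfrac12-2Ti,u_j)\Lambda(\tfrac12,u_j)$, and the denominator factor $\xi(1-2Ti+2\eta) \to \xi(1-2Ti)$; using $\xi(1-2Ti) = \overline{\xi(1+2Ti)}$ for real $T$ (since $\xi$ has real Taylor coefficients) collapses the denominator to $|\xi(1+2Ti)|^4$, giving the first line of (\ref{reg4Eisen_exact}) with the factor $\Lambda^2(\tfrac12,u_j)$. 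In the continuous-spectrum integral the three $\eta$-dependent products collapse to the numerator $\xi^4(\tfrac12+ti)\,\xi^2(\tfrac12+ti+2Ti)\,\xi^2(\tfrac12+ti-2Ti)$; the interchange of limit and integration is justified by dominated convergence, the four Gamma factors inside $\xi(\tfrac12+ti)$ giving exponential decay in $t$ and an $\eta$-uniform integrable majorant.

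The heart of the matter is the limit of $\sum_{j=1}^8 \Xi_j(T,\eta)$, which I would treat exactly as prepared in the preceding calculation. Each $F_j$ is regular at $\eta = 0$, so all the singularity sits in the explicit prefactors $\xi(1\pm\eta)$; substituting the Laurent expansion (\ref{xi_Laurent}) of $\xi$ at $s = 1$ for these prefactors produces the displayed $\eta$-expansions of the $\Xi_j$. Logically, because the left side and both spectral terms are continuous at $\eta = 0$, the sum $\sum_j \Xi_j$ must have a removable singularity there, and the cancellation is purely algebraic: $F_2(0) = F_3(0) = F_6(0) = \xi(2)^{-1}$ removes the $\eta^{-2}$ poles in the grouping $\Xi_2+\Xi_3-2\Xi_6$, while $F_1(0) = F_8(0)$, $F_4(0) = F_5(0)$, and the identity $F_2'(0)+F_3'(0) = 2F_6'(0)$ (read off from the explicit formulas for $F_2'(0), F_3'(0), F_6'(0)$ in terms of $\mathrm{Re}\,\tfrac{\xi'}{\xi}(1+2Ti)$) remove the $\eta^{-1}$ poles. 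Summing the finite parts then gives the closed form for $\lim_{\eta\to0}\sum_j\Xi_j$ displayed just before the Proposition.

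The main obstacle is not conceptual but the bookkeeping that turns that limit into the four bracketed blocks of (\ref{reg4Eisen_exact}). I would compute each logarithmic derivative $\tfrac{F_j'}{F_j}(0)$ and $\tfrac{F_j''}{F_j}(0)$ as an explicit sum of log-derivatives of $\xi$ at the points $1\pm2Ti$, $1\pm4Ti$, $2\pm4Ti$ and $2$, inserting the numerical values $F_j(0)$ only at the end. The Laurent coefficient $b$ from (\ref{xi_Laurent}) must disappear from the answer; this is forced by $F_2(0)+F_3(0)-2F_6(0) = 0$, leaving the $\xi(2)^{-1}$ block, whose $\xi''$ and $a$-terms come from $F_2,F_3,F_6$ and the $a$-multiples of $F_1,F_4,F_5,F_8$. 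The two genuinely $T$-dependent quotients in the last two lines are $F_4(0) = F_5(0) = \xi^2(1+2Ti)\xi(1+4Ti)/\big(\xi^2(1-2Ti)\xi(2+4Ti)\big)$ and its conjugate $F_1(0) = F_8(0)$, their brackets assembled from $a(F_4(0)+F_5(0))$ together with $F_5'(0)-F_4'(0)$ and the analogue for $\Xi_1,\Xi_8$. The symmetry under $T\mapsto -T$ (complex conjugation of the arguments on $\mathrm{Re}(s) = 1$) and the nonvanishing of $\xi$ on that line for $T\neq 0$ serve as consistency checks and guarantee finiteness of all denominators.
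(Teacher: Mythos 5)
Your proposal follows essentially the same route as the paper: starting from (\ref{4Eisen_eta}), using the continuity of the regularized inner product in $\eta$ via (\ref{reg_subtract_Eisen}), expanding the prefactors $\xi(1\pm\eta)$ of the $\Xi_j$ by (\ref{xi_Laurent}), cancelling the $\eta^{-2}$ and $\eta^{-1}$ poles through the identities $F_2(0)=F_3(0)=F_6(0)=\xi(2)^{-1}$, $F_1(0)=F_8(0)$, $F_4(0)=F_5(0)$, $F_2'(0)+F_3'(0)=2F_6'(0)$, and assembling the finite parts into the stated closed form. The only differences are cosmetic: you make explicit some justifications the paper leaves implicit (dominated convergence in the continuous-spectrum integral, the forced cancellation of the Laurent coefficient $b$), which is a welcome addition but not a different argument.
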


\smallskip

{\em Remark:} The exact value of the constant $a$ is $\frac{C_0}{2} -\frac{\ln \pi}{2} -\ln 2 =-0.9769...$, where $C_0=0.57721...$ is Euler's constant. This is a consequence of the following two formulas: $\zeta(s)=\frac{1}{s-1} +C_0 + O((s-1))$ and $\frac{\Gamma'}{\Gamma}(1/2)= -C_0 -2 \ln 2$. Also we recall that $\xi(2)=\frac{\pi}{6}$.

\smallskip

Using Stirling's approximations $|\Gamma(\sigma +i t)|=e^{-\pi |t|/2}|t|^{\sigma-\frac{1}{2}} \sqrt{2 \pi} \{ 1+ O(|t|^{-1}) \}$ and $\frac{\Gamma'}{\Gamma}(s)= \log s + O(|s|^{-1})$ valid in a fixed vertical strip when $|t| \rightarrow \infty$, and classical estimates for the Riemann zeta-function on the edge of the critical strip \cite[section 6.3]{MoV},
\begin{equation} \label{zeta_estimates_Re=1}
(\log t)^{-2/3}(\log\log t)^{-1/3} \ll \zeta(s) \ll (\log t)^{2/3} \qquad \text{and} \qquad \frac{\zeta'}{\zeta}(s) \ll (\log t)^{2/3}(\log\log t)^{1/3}
\end{equation}
for
\begin{align*} 
s=\sigma+it, \ \ 1-\sigma \ll (\log t)^{-2/3}(\log\log t)^{-1/3}, 
\end{align*}
one obtains first that $\frac{\xi'}{\xi}(1 \pm 2Ti) \ll \log T$ and then that the contribution of the terms in the last two lines in the formula (\ref{reg4Eisen_exact}) is $O(\frac{\log^2 T}{T^{1/2}})$. Therefore the contribution on the right-hand side of (\ref{reg4Eisen_exact}) coming from the regularization process is
$$
\frac{24}{\pi} \left[  {\rm Re} \frac{\xi''}{\xi}(1+2Ti) +2 \left| \frac{\xi'}{\xi}(1+2Ti)  \right|^2  +{\rm Re} \frac{(\xi')^2}{\xi^2}(1+ 2Ti) \right] + O(\log T).
$$
Since $\frac{\xi'}{\xi}(s)=-\frac{\log \pi}{2} +\frac{1}{2} \frac{\Gamma'}{\Gamma} (\frac{s}{2}) +\frac{\zeta'}{\zeta}(s)$, from Stirling's approximation and (\ref{zeta_estimates_Re=1}) we obtain further that when $T \rightarrow \infty$
\begin{equation*}
2 \left| \frac{\xi'}{\xi}(1+2Ti)  \right|^2  +{\rm Re} \frac{(\xi')^2}{\xi^2}(1+ 2Ti) =\frac{3}{4} \log^2 T + O(\log^{5/3+\epsilon} T),
\end{equation*}
for any $\epsilon>0$.

\smallskip

\begin{lem} \label{lem_zeta_2ndDer} As $t \rightarrow \infty$, we have
$$
\frac{\zeta''}{\zeta}(1+ti) \ll \log^{4/3+\epsilon} t.
$$
\end{lem}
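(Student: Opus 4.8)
The plan is to reduce the second logarithmic derivative to quantities already controlled by the estimates in (\ref{zeta_estimates_Re=1}). The key algebraic identity is
$$
\frac{\zeta''}{\zeta}(s) = \left(\frac{\zeta'}{\zeta}\right)'(s) + \left(\frac{\zeta'}{\zeta}(s)\right)^2,
$$
which follows from differentiating the logarithmic derivative $\zeta'/\zeta$ once more. This splits the problem into two pieces. The squared term is immediate from the bound $\frac{\zeta'}{\zeta}(s) \ll (\log t)^{2/3}(\log\log t)^{1/3}$ in (\ref{zeta_estimates_Re=1}): squaring gives a contribution of size $(\log t)^{4/3}(\log\log t)^{2/3} \ll \log^{4/3+\epsilon}t$, which is within the claimed bound.

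The remaining task is to bound the derivative $(\zeta'/\zeta)'(1+ti)$, and this is where a Cauchy estimate enters. Since the bounds in (\ref{zeta_estimates_Re=1}) hold throughout the region $1-\sigma \ll (\log t)^{-2/3}(\log\log t)^{-1/3}$ --- which is precisely the classical zero-free region, where $\zeta$ has no zeros and hence $\zeta'/\zeta$ is holomorphic --- I would fix a radius
$$
r \asymp (\log t)^{-2/3}(\log\log t)^{-1/3}
$$
equal to a small fixed fraction of the width of this region, and apply Cauchy's integral formula for the derivative on the circle $|s-(1+ti)|=r$. This gives
$$
\left|\left(\frac{\zeta'}{\zeta}\right)'(1+ti)\right| \le \frac{1}{r}\max_{|s-(1+ti)|=r}\left|\frac{\zeta'}{\zeta}(s)\right| \ll \frac{(\log t)^{2/3}(\log\log t)^{1/3}}{(\log t)^{-2/3}(\log\log t)^{-1/3}} = (\log t)^{4/3}(\log\log t)^{2/3}.
$$
Combining the two pieces yields $\frac{\zeta''}{\zeta}(1+ti) \ll (\log t)^{4/3}(\log\log t)^{2/3} \ll \log^{4/3+\epsilon}t$, as required.

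The point requiring care --- the main obstacle --- is verifying that the entire circle of radius $r$ lies inside the zero-free region, so that both the holomorphy of $\zeta'/\zeta$ and the bound from (\ref{zeta_estimates_Re=1}) apply uniformly there. Choosing $r$ to be, say, half the width of the region ensures that every point on the circle satisfies $\sigma \ge 1 - r$, which still meets the hypothesis $1-\sigma \ll (\log t)^{-2/3}(\log\log t)^{-1/3}$; and since the imaginary part varies by at most $r = o(1)$ along the circle, the factor $\log t$ in the estimate is unchanged up to a constant. Once this uniformity is in place the Cauchy bound is immediate, and the loss of exactly one factor of $(\log t)^{2/3}(\log\log t)^{1/3}$, coming from dividing by $r$, is what produces the final exponent $4/3$.
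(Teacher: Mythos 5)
Your proof is correct, and it uses the same decomposition $\frac{\zeta''}{\zeta} = \left(\frac{\zeta'}{\zeta}\right)' + \left(\frac{\zeta'}{\zeta}\right)^2$ as the paper; the difference is in how the derivative term is bounded. The paper invokes the Borel--Carath\'{e}odory lemma \cite[Lemma 6.2]{MoV} together with (\ref{zeta_estimates_Re=1}) to obtain $\left(\frac{\zeta'}{\zeta}\right)'(1+ti) \ll \log^{4/3+\epsilon} t$, whereas you apply Cauchy's integral formula for the derivative on a circle whose radius is a fixed small multiple of the width of the Vinogradov--Korobov region. Your route is arguably the more economical one here: Borel--Carath\'{e}odory is the natural tool when only the \emph{real part} of a holomorphic function is controlled (e.g.\ when one starts from the bound on $\log|\zeta|$ and wants to control $\log\zeta$ and its derivatives), but since (\ref{zeta_estimates_Re=1}) already bounds the \emph{modulus} of $\zeta'/\zeta$ throughout the region, a bare Cauchy estimate suffices, losing exactly the factor $1/r \asymp (\log t)^{2/3}(\log\log t)^{1/3}$ that produces the exponent $4/3$ --- the same loss the paper incurs. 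You also attend to the two uniformity points this argument genuinely requires: that the closed disk stays inside the zero-free region (so $\zeta'/\zeta$ is holomorphic there and the bound of (\ref{zeta_estimates_Re=1}) applies at every point of the circle, including the points with $\sigma>1$, where it holds trivially), and that the imaginary part varies by only $o(1)$ along the circle, so the factor $\log t$ in the estimate is unaffected. Both approaches yield $(\log t)^{4/3}(\log\log t)^{2/3} \ll \log^{4/3+\epsilon}t$, matching the claimed bound.
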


\begin{proof} We can use (\ref{zeta_estimates_Re=1}) and the Borel-Carath\'{e}odory lemma \cite[Lemma 6.2]{MoV} to get the bound
$$
\left( \frac{\zeta'}{\zeta} \right)'(1+ti) \ll \log^{4/3+\epsilon} t.
$$
This and (\ref{zeta_estimates_Re=1}) again imply the stated bound for $\frac{\zeta''}{\zeta}(1+ti)=(\frac{\zeta'}{\zeta})'(1+ti) + (\frac{\zeta'}{\zeta}(1+ti))^2$.
\end{proof}

\smallskip

From
$$
\frac{\xi''}{\xi}(s)=\frac{\log^2 \pi}{4} + \frac{1}{4} \frac{\Gamma''}{\Gamma}(\frac{s}{2}) +\frac{\zeta''}{\zeta}(s) -\frac{\log \pi}{2} \frac{\Gamma'}{\Gamma}(\frac{s}{2}) -(\log \pi)\frac{\zeta'}{\zeta}(s)+\frac{\Gamma'}{\Gamma}(\frac{s}{2})\frac{\zeta'}{\zeta}(s),
$$
using (\ref{zeta_estimates_Re=1}), Lemma \ref{lem_zeta_2ndDer} and another well-known approximation $\frac{\Gamma''}{\Gamma}(\frac{1}{2} +Ti)=\log^2 T + O(\log T)$, we obtain also the asymptotic
\begin{equation*}
{\rm Re} \frac{\xi''}{\xi}(1+2Ti) = \frac{1}{4} \log^2 T  + O(\log^{5/3+\epsilon} T).
\end{equation*}

\smallskip

On the other hand, the contribution of the continuous spectrum in (\ref{reg4Eisen_exact}) i.e.  the integral on the right-hand side is of a  smaller  size, being bounded by the integral in the following Lemma:

\begin{lem} \label{Lem_bound_cont_spec} For $T \ge 1$ we have:
$$
\int_{-\infty}^{\infty} \frac{ |\xi(\frac{1}{2} +t  i) |^4    |\xi(\frac{1}{2} +t  i  + 2Ti)|^2 |\xi(\frac{1}{2} +t  i  - 2Ti)|^2 }{|\xi(1+2ti)|^2  |\xi(1+ 2Ti)|^4   }  dt  \ll T^{-1/6},
$$
for some absolute implicit constant. 

\end{lem}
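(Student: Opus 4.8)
The plan is to strip off the archimedean factors with Stirling's formula, reduce the whole integrand to the zeta function, and then balance subconvexity on the critical line against the classical fourth moment. Writing $\xi(\sigma+i\tau)=\pi^{-\sigma/2}\Gamma(\tfrac{\sigma+i\tau}{2})\zeta(\sigma+i\tau)$ and using $|\Gamma(\sigma+iu)|\sim\sqrt{2\pi}\,|u|^{\sigma-1/2}e^{-\pi|u|/2}$ on every gamma factor, the decisive feature is the total exponential weight: after cancelling the $e^{-\pi|t|}$ from $|\xi(1+2ti)|^2$ and the $e^{-2\pi T}$ from $|\xi(1+2Ti)|^4$, the numerator leaves $\exp(-\tfrac{\pi}{2}|t+2T|-\tfrac{\pi}{2}|t-2T|+2\pi T)$, which an elementary case analysis shows is identically $1$ on $|t|\le 2T$ and equals $e^{-\pi(|t|-2T)}$ for $|t|>2T$. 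Thus, up to an exponentially small tail, the integral lives on $|t|\le 2T$, and since replacing $t$ by $-t$ interchanges the two factors at $t\pm 2T$ (using $\xi(\bar s)=\overline{\xi(s)}$) the integrand is even, so I may restrict to $t\in[0,2T]$. The surviving polynomial part of the Stirling factors is exactly $|t|^{-1}|t+2T|^{-1/2}|t-2T|^{-1/2}$.

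On the main range I would bound the two factors on $\mathrm{Re}(s)=1$ by (\ref{zeta_estimates_Re=1}), so that $|\zeta(1+2ti)|^{-2}\ll\log^{O(1)}(2+t)$ and the $t$-free $|\zeta(1+2Ti)|^{-4}\ll\log^{O(1)}T$, and bound the two shifted critical-line factors by the classical subconvex (Weyl) bound $|\zeta(\tfrac12+iu)|\ll |u|^{1/6}\log^{O(1)}|u|$. The point of the shift is that the $+2T$ factor sits at height $\asymp T$, so $|\zeta(\tfrac12+(t+2T)i)|^2\,(t+2T)^{-1/2}\ll T^{1/3-1/2}\log^{O(1)}T=T^{-1/6}\log^{O(1)}T$ uniformly in $t\in[0,2T]$; likewise $|\zeta(\tfrac12+(t-2T)i)|^2|t-2T|^{-1/2}\ll |t-2T|^{-1/6}\log^{O(1)}T$ once $|t-2T|\ge 1$. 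Crucially, the central factor $|\zeta(\tfrac12+ti)|^4$ must not be treated pointwise—subconvexity there yields a bound that grows like $T^{1/3}$—so I would instead invoke the classical fourth-moment bound $\int_0^U|\zeta(\tfrac12+it)|^4\,dt\ll U\log^4U$.

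These reductions leave, away from the edge $|t-2T|<1$ treated below, the single integral $\int_1^{2T}|\zeta(\tfrac12+ti)|^4\,t^{-1}(1+|t-2T|)^{-1/6}\,dt$, which I would split at $t=T$. On $[1,T]$ the factor $(1+|t-2T|)^{-1/6}$ is $\ll T^{-1/6}$ and a dyadic application of the fourth moment against the weight $t^{-1}$ gives $\ll T^{-1/6}\log^5 T$; on $[T,2T]$ the substitution $u=2T-t$ reduces matters to $T^{-1}\int_1^T|\zeta(\tfrac12+(2T-u)i)|^4(1+u)^{-1/6}\,du\ll T^{-1}\cdot T^{5/6}\log^4T=T^{-1/6}\log^4T$. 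Collecting the uniform $T^{-1/6}$ from the $+2T$ factor, the $T^{-1/6}\log^{O(1)}T$ from this central integral, and the harmless $\log^{O(1)}T$ from the lines $\mathrm{Re}(s)=1$, the total is $\ll T^{-1/3}\log^{O(1)}T$, which is comfortably $\ll T^{-1/6}$; the negative $t$ range is identical by symmetry and the tails $|t|>2T$ are exponentially small.

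I expect the main obstacle to be the edge region $t\in[2T-1,2T]$ (and symmetrically near $t=-2T$), where the Stirling weight $|t-2T|^{-1/2}$ is genuinely singular and the fourth moment cannot be applied over such a short interval. There I would instead estimate the central factor pointwise by subconvexity, $|\zeta(\tfrac12+ti)|^4\ll T^{2/3}\log^{O(1)}T$, and check that this is more than absorbed by the weight $t^{-1}\ll T^{-1}$ and the uniform $T^{-1/6}$ from the $+2T$ factor, together with the integrable $\int_0^1 u^{-1/2}\,du$, so that this piece contributes only $\ll T^{-1/2}\log^{O(1)}T$. The remaining care is purely bookkeeping: verifying that the $O(|u|^{-1})$ errors in Stirling are negligible and that the few factors whose argument is $O(1)$ (near $t=0$, and the $-2T$ factor near $t=2T$) are simply bounded, using the pole of $\xi$ at $s=1$ to see that the apparent $|t|^{-1}$ singularity at the origin is cancelled by the vanishing of $|\xi(1+2ti)|^{-2}$.
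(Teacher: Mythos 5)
Your reduction via Stirling's formula, the evenness symmetry, the exponentially small tail, and the treatment of the edge $|t-2T|<1$ and of $t$ near $0$ all match the paper's argument (which follows Spinu). The genuine gap is in the range $t\in[T,2T]$: your claimed inequality
\[
\int_1^T \Bigl|\zeta\bigl(\tfrac12+(2T-u)i\bigr)\Bigr|^4(1+u)^{-1/6}\,du \;\ll\; T^{5/6}\log^4 T
\]
does not follow from the classical fourth-moment bound $\int_0^U|\zeta(\tfrac12+it)|^4\,dt\ll U\log^4U$, which is the only tool you invoke for it. A dyadic block $u\in[U,2U]$ is an interval of length $U$ sitting at height $\asymp 2T$, and for such intervals the classical bound gives only $\int_{2T-2U}^{2T-U}|\zeta(\tfrac12+it)|^4\,dt\ll T\log^4T$, not $\ll U\log^4T$; a bound of the latter shape for all $U\ge 1$ is a short-interval fourth moment problem, known unconditionally only for $U\gg T^{2/3}$ (Iwaniec). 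Your estimate implicitly multiplies the average of the weight $(1+u)^{-1/6}$ by the average size of $|\zeta|^4$, but the whole difficulty is correlation: zeta could be large precisely where your weight peaks, namely near $t=2T$. With your own tools the best per-block bound is $U^{-1/6}\min\bigl(UT^{2/3},\,T\bigr)\log^{O(1)}T$, whose maximum (at $U\asymp T^{1/3}$) is $T^{17/18}\log^{O(1)}T$, not $T^{5/6}\log^4T$. Nor can you simply discard the weight on $[T,2T]$: since Weyl's exponent is exactly $1/6$, your uniform factor from the $t+2T$ term is only $T^{-1/6}\log^{O(1)}T$, so without some power saving from this range you would land at $T^{-1/6}\log^{O(1)}T$, which misses the stated log-free bound $\ll T^{-1/6}$.

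The gap is fixable in two ways. First, the hybrid bound $T^{17/18}\log^{O(1)}T$ just described already suffices: it makes the $[T,2T]$ contribution $\ll T^{-1/18}\log^{O(1)}T$, and multiplied by your uniform $T^{-1/6}\log^{O(1)}T$ this is $\ll T^{-1/6-1/18}\log^{O(1)}T\ll T^{-1/6}$, so your proof goes through once the unjustified step is replaced by this dyadic argument. Second, the paper sidesteps the issue entirely by an asymmetric choice of bounds: it estimates the $t-2T$ factor merely by \emph{convexity}, so that $|\zeta(\tfrac12+(t-2T)i)|^2(1+|t-2T|)^{-1/2}\ll T^{\varepsilon}$ is uniformly bounded and the plain fourth moment $\int_0^{3T}|\zeta(\tfrac12+ti)|^4(1+t)^{-1}\,dt\ll T^{\varepsilon}$ applies with no correlation issue, while it estimates the $t+2T$ factor by subconvexity with an exponent $\theta$ \emph{strictly} less than $1/6$ (which is available), so that the resulting saving $T^{2\theta-1/2}\ll T^{-1/6-\delta}$ absorbs every $T^{\varepsilon}$ and logarithm. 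That asymmetry --- extracting decay from only one shifted factor, and using strict subconvexity rather than Weyl --- is exactly what makes the paper's version of this computation clean, and is the idea your write-up is missing.
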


\begin{proof} This is exactly Proposition 3.4 in \cite{Sp}. For completeness we briefly repeat here the argument. After employing Stirling's asymptotic formula for Gamma functions and  after splitting the integral $\int_{-\infty}^{+\infty}=2 \int_0^{3T} + 2 \int_{3T}^{+\infty}$, one can see easily that the contribution in the range $\int_{3T}^{+\infty}$ decays exponentially with $T$. Therefore, one needs to bound the integral
$$
\int_0^{3T} \frac{|\zeta(\frac{1}{2} + ti)|^4}{(1+ |t|)} \cdot \frac{|\zeta(\frac{1}{2} +(t+2T)i)|^2}{(1+|t+2T|)^{1/2}} \cdot \frac{|\zeta(\frac{1}{2} +(t-2T)i)|^2}{(1+|t-2T|)^{1/2}} \cdot \frac{e^{\frac{\pi}{2} (4T-|t-2T| -|t +2T|)   }}{|\zeta(1+2ti)|^2 |\zeta(1+2Ti)|^4} dt.
$$
By (\ref{zeta_estimates_Re=1}) the fourth ratio can be bounded by $T^{\varepsilon}$, the third ratio can be bounded by convexity bound, while the second ratio can be bounded using the subconvexity bound $\zeta(\frac{1}{2} +ti) \ll (1+|t|)^{\theta + \varepsilon}$ for some $\theta <\frac{1}{6}$, which is available and sufficient. The bound follows by the fourth moment estimate $\int_0^{3T} \frac{|\zeta(\frac{1}{2} + ti)|^4}{1+ |t|} dt \ll T^{\varepsilon}$, for any $\varepsilon >0$.
\end{proof}

Therefore after putting together everything in this section, we obtain the asymptotic formula in Theorem \ref{MainResult}. Note that the theorem has dropped the condition $\epsilon_j=1$. This is fine because when $\epsilon_j=-1$, we have $\Lambda(\frac{1}{2}, u_j)=0$ and the summand vanishes.

\smallskip

\section{Proof of Theorem \ref{MainConj}} \label{lastSection}

The regularized fourth moment of Eisenstein series $E(z, 1/2 + iT)$ 

$$
 \langle E_{iT}^2, \; E_{iT}^2\rangle_{reg}=\int_{\Gamma \backslash \mathbb{H}}^{reg} |E(z , \frac{1}{2} + iT)|^4 d \mu(z)
$$
can also be  expressed directly using (\ref{reg_int_area}). The corresponding function $\Phi(y)$ is given by
$$
\Phi(y)=|e(y, 1/2 + i T)|^4= c^2 y^{2-4Ti} + 4c y^{2 -2Ti}  +6 y^2 +4 \overline{c} y^{2+ 2Ti}  + \overline{c}^2 y^{2+ 4Ti}
$$
where $c=\varphi(1/2 + iT)=\frac{\xi(1-2Ti)}{\xi(1+ 2Ti)}$, so in particular $|c|=1$. Hence we get

$$
\int_{\Gamma \backslash \mathbb{H}}^{reg} |E(z , \frac{1}{2} + iT)|^4 d \mu(z)=
$$
$$
=\int_{\mathcal{D}_A} |E(z, 1/2 +iT)|^4  d\mu(z) + \int_{\mathcal{C}_A} (|E(z, 1/2 +iT)|^4  - |e(y, 1/2 + i T)|^4) d\mu(z)  -\hat{\Phi}(A),
$$
with $\hat{\Phi}(A)$ given explicitly by  
$$
\hat{\Phi}(A)= c^2 \frac{A^{1-4Ti}}{1-4Ti} + 4c \frac{A^{1-2Ti}}{1-2Ti}  +6A +4 \overline{c} \frac{A^{1+2Ti}}{1+2Ti} + \overline{c}^2 \frac{A^{1+4Ti}}{1+4Ti}.
$$
In particular, $\hat{\Phi}(A) \ll A$. This will be an admissible error for all the values of the truncation parameter  in the range $1 < A  \ll \log T$.

\smallskip

Hence the difference between the regularized integral of $|E|^4$ and the integral $\int_X |E_A|^4 d \mu$  with the truncated Eisenstein series considered in \cite{Sp} is

$$
\int_{\Gamma \backslash \mathbb{H}}^{reg} |E(z , \frac{1}{2} + iT)|^4 d \mu(z)
- \| E_A(\cdot , \frac{1}{2} + iT) \|^4_4
$$
$$
=\int_{\mathcal{D}_A} |E|^4 d \mu  + \int_{\mathcal{C}_A} ( |E_A +e|^4 - |e|^4) d \mu -\hat{\Phi}(A) - \int_{\mathcal{D}_A} |E|^4 d \mu - \int_{\mathcal{C}_A} |E_A|^4 d \mu
$$
\begin{equation} \label{formula_difference}
=   \int_{\mathcal{C}_A} ( e^2 \overline{E}_A^2 + \overline{e}^2 E_A^2 + 4 |e|^2 |E_A|^2 ) d \mu  +2 \int_{\mathcal{C}_A} ( |E_A|^2 \overline{E}_Ae + |E_A|^2 E_A \overline{e} ) d \mu    -\hat{\Phi}(A),
\end{equation}
since $\int_{\mathcal{C}_A} E_A \overline{e} |e|^2 d \mu=\int_{\mathcal{C}_A} \overline{E}_A e |e|^2 d \mu=0$. 

\smallskip

Here, the first integral in the cuspidal region can be explicitly computed. From the integral representation 
$$
K_{\nu}(y)=\int_0^{+\infty}e^{-y \cosh t} \cosh(\nu t) dt, \qquad {\rm Re}(\nu) > - \frac{1}{2},
$$
we see that $K_{iT}(y)$ is real for $y >0$, $T \in \mathbb{R}$ and hence for $z \in \mathcal{C}_A$
\begin{equation} \label{eq_Fourier_real}
\xi(1+ 2Ti) E_A(z, 1/2 +iT)= 4 \sum_{n=1}^{\infty} \tau_{iT}(n) \, y^{\frac{1}{2}} K_{iT}(2 \pi n y) \, \cos (2 \pi n x)
\end{equation}
is also real-valued.  Using this and the functional equation for $\xi(s)$, after a short calculation one gets that the first integral in (\ref{formula_difference}) is equal to
$$
\int_{\mathcal{C}_A} \left( \frac{12 y}{|\xi(1+ 2Ti)|^2} + \frac{6 y^{1+2Ti}}{\xi^2(1 - 2Ti)}  + \frac{6 y^{1-2Ti}}{\xi^2(1 + 2Ti)}  \right)  \xi^2(1+ 2Ti) E_A^2(z, 1/2 +iT) d \mu(z).
$$

\smallskip

Therefore we need to calculate  the twisted integrals of the second moment of the truncated Eisenstein series in the cuspidal region 
$$
I_{\eta}:= \int_{\mathcal{C}_A} y^{1+ \eta} \xi^2(1+ 2Ti) E_A^2(z, 1/2 +iT) d \mu(z),
$$
for the values of parameter $\eta \in \{ 0, \pm 2Ti \}$. Substituting here the Fourier expansion (\ref{eq_Fourier_real}) we obtain
$$
I_{\eta}= 16  \int_A^{+\infty} \int_0^1 y^{1+ \eta} \left( \sum_{n=1}^{\infty} \tau_{iT}(n) \, y^{\frac{1}{2}} K_{iT}(2 \pi n y) \, \cos (2 \pi n x) \right)^2 \frac{dx dy}{y^2}
$$
$$
=8 \sum_{n=1}^{\infty}  \tau_{iT}^2(n)  \int_A^{\infty}   K_{iT}^2(2 \pi n y) y^{\eta} d y =8 \sum_{n=1}^{\infty}  \tau_{iT}^2(n) (2 \pi n)^{-1 - \eta} g(2 \pi A n ),
$$
where
$$
g(x):=\int_x^{\infty} K_{iT}^2 (y) y^{\eta} d y.
$$
The Mellin transform of this function is equal to
$$
G(s):=\int_0^{\infty} g(x) x^s \frac{dx}{x} =\frac{1}{s} \int_0^{\infty} K_{iT}^2(x) x^{\eta + s} dx
$$
$$
=\frac{2^{\eta -2 + s}}{s \Gamma(1 + \eta +s)} \Gamma^2\left(\frac{1 +\eta +s}{2} \right) \Gamma \left( \frac{1 +\eta +s}{2}  +iT \right) \Gamma \left( \frac{1 +\eta +s}{2}  -iT \right)
$$
by integration by parts and the Mellin-Barnes formula  \cite{GR}, 6.576.4
$$
\int_0^{\infty} K_{\mu}(y)K_{\nu}(y) y^s \frac{dy}{y}=\frac{2^{s-3}}{\Gamma(s)} \prod_{\pm, \pm} \Gamma\left(\frac{s\pm \mu \pm \nu}{2} \right).
$$
By the inverse Mellin transform we have $g(x)=\frac{1}{2 \pi i} \int_{(3)} G(s) x^{-s} ds$ (where the integration is over the line ${\rm Re}(s)=3$) and so we get
$$
I_{\eta}=\frac{8}{(2 \pi)^{1+ \eta}} \frac{1}{2 \pi i} \int_{(3)} G(s) (2 \pi  A)^{-s} \sum_{n=1}^{\infty} \frac{\tau_{iT}^2(n)}{n^{s+ 1 + \eta}} ds.
$$
Here, since $\tau_{iT}(n)=\sigma_{2iT}(n) n^{-iT}$, we have by Ramanujan's identity 
$$
\sum_{n=1}^{\infty} \frac{\tau_{iT}^2(n)}{n^{s+ 1 + \eta}}= \frac{\zeta^2(s +1 + \eta) \zeta(s+1+\eta +2 Ti) \zeta(s+1+\eta -2 Ti)}{\zeta(2 s +2+ 2 \eta)},
$$
which then  gives

$$
I_{\eta}= \frac{1}{2 \pi i} \int_{(4)} \frac{A^{1-s}}{s-1} \frac{\xi^2(s  +\eta) \xi(s  +\eta + 2Ti) \xi(s  +\eta -2Ti)}{\xi(2s  + 2 \eta)} ds.
$$
The integrand is rapidly decreasing in vertical strips and it is regular on the line ${\rm Re}(s)=\frac{1}{2}$ (for all three values of the parameter $\eta$), so we can shift the line of integration from ${\rm Re}(s)=4$ to ${\rm Re}(s)=\frac{1}{2}$:
\begin{equation} \label{shifting_contour}
I_{\eta}=\mathcal{R}_{\eta} + \frac{1}{2 \pi i} \int_{(1/2)} \frac{A^{1-s}}{s-1} \frac{\xi^2(s  +\eta) \xi(s  +\eta + 2Ti) \xi(s  +\eta -2Ti)}{\xi(2s  + 2 \eta)} ds,
\end{equation}
where $\mathcal{R}_{\eta}=\sum_P R_{\eta, P}$ is the sum of residues $R_{\eta, P}$ of the poles $P$ that we encounter.

\smallskip

In the case $\eta=0$, the integrand  has two simple poles at $s=1 \pm 2Ti$ with the residues
$$
R_{0, 1-2Ti}=-\frac{A^{2Ti}  \xi^2(1-2Ti) \xi(1-4Ti)}{2Ti \, \xi(2-4Ti)} \; \; \text{and} \; \; R_{0, 1+2Ti}=\frac{A^{-2 Ti} \xi^2(1+2Ti) \xi(1+4Ti)}{2Ti \, \xi(2+4Ti)}
$$
and the triple pole at $s=1$ with residue
$$
R_{0, 1}=\frac{|\xi(1+2Ti)|^2 }{\xi(2)} \cdot \left[  \left|\frac{\xi'}{\xi}(1+2Ti) \right|^2  +  {\rm Re} \frac{\xi''}{\xi}(1+2Ti)    +\frac{1}{2} \log^2 A  \right.
$$
$$
\left. -2 (\log A) {\rm Re} \frac{\xi'}{\xi}(1+2Ti)  +2  \left( \frac{\xi'}{\xi}(2) -a \right) \left(\log A   -2 {\rm Re} \frac{\xi'}{\xi}(1+2Ti) \right)  \right.
$$
$$
\left.    +4 \left( \frac{\xi'}{\xi}(2) \right)^2 -4 a \frac{\xi'}{\xi}(2) +a^2 +2 b -2 \frac{\xi''}{\xi}(2) \right],
$$
where the constants $a$ and $b$ are as in (\ref{xi_Laurent}).

\smallskip

In the case $\eta=2Ti$, the integrand  has the simple pole at $s=1-4Ti$ with the residue
$$
R_{2Ti, 1-4Ti}= - \frac{A^{4Ti} \xi^2(1-2Ti) \xi(1-4Ti)}{4Ti \, \xi(2 -4Ti)}
$$
and two double poles at $s=1$ and $s= 1-2Ti$ with the corresponding residues

$$
R_{2Ti, 1}=\frac{\xi^2(1+2Ti) \xi(1+4Ti)}{\xi(2+4Ti)} \left[ a -\log A +\frac{\xi'}{\xi}(1+4Ti) +2 \frac{\xi'}{\xi}(1+2Ti) -2 \frac{\xi'}{\xi}(2+ 4Ti) \right]
$$
and
$$
R_{2Ti, 1-2Ti}=-  \frac{A^{2Ti} \xi(1+2Ti) \xi(1-2Ti)}{2Ti \, \xi(2)} \left[  2 {\rm Re}\frac{\xi'}{\xi}(1+2Ti)-\log A  +\frac{1}{2Ti} +2 a -2 \frac{\xi'}{\xi}(2)\right].
$$

\smallskip

In the case $\eta=-2Ti$, the integrand  has the simple pole at $s=1+4Ti$ with the residue
$$
R_{-2Ti, 1+4Ti}=\frac{A^{-4Ti} \xi^2(1+2Ti) \xi(1+4Ti)}{4Ti \, \xi(2+ 4Ti)}
$$
and two double poles at $s=1$ and $s= 1+2Ti$ with the corresponding residues

$$
R_{-2Ti, 1}=  \frac{\xi^2(1-2Ti) \xi(1-4Ti)}{\xi(2-4Ti)} \left[ a  - \log A + \frac{\xi'}{\xi}(1-4Ti) +2\frac{\xi'}{\xi}(1-2 Ti) -2\frac{\xi'}{\xi}(2-4Ti) \right]
$$
and
$$
R_{-2Ti, 1+2Ti}=\frac{A^{-2Ti} \xi(1+2Ti) \xi(1-2Ti)}{2 T i \, \xi(2)} \left[  2 {\rm Re}\frac{\xi'}{\xi}(1+2Ti) -\log A  -\frac{1}{2Ti} + 2a -2 \frac{\xi'}{\xi}(2) \right].
$$

In particular, we have 
$$ 
\frac{1}{\xi^2(1 \pm 2Ti)}\mathcal{R}_{\mp 2Ti} \ll (\log T)^2 T^{-1/2},
$$ 
when $T \rightarrow \infty$.

\smallskip

The contribution of the integrals on the shifted line in (\ref{shifting_contour}) is bounded in the following Lemma:

\begin{lem} \label{Lemma_bound_shifted_integrals} For any $\eta \in \{ 0, \pm 2Ti \}$ with $T>1$, we have
$$
 \int_{-\infty}^{\infty} \left| \frac{A^{\frac{1}{2} -ti}}{\frac{1}{2} -ti} \cdot \frac{\xi^2(\frac{1}{2} +ti  +\eta) \xi(\frac{1}{2} +ti  +\eta + 2Ti) \xi(\frac{1}{2} +ti  +\eta -2Ti)}{ \xi^2(1+ 2Ti) \xi(1+ 2ti  + 2 \eta)} \right| dt \ll A^{1/2} T^{-1/6},
$$
with an absolute implicit constant.

\end{lem}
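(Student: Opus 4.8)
The plan is to follow the proof of Lemma~\ref{Lem_bound_cont_spec} (Spinu's Proposition~3.4), treating the three values $\eta\in\{0,\pm 2Ti\}$ in parallel. First I would observe that $|A^{1/2-ti}|=A^{1/2}$, so the factor $A^{1/2}$ pulls out of the integral at once and it remains only to show that the $t$-integral of the remaining modulus is $\ll T^{-1/6}$, uniformly in $\eta$. Next I would insert $\xi(s)=\pi^{-s/2}\Gamma(s/2)\zeta(s)$ everywhere and apply Stirling's formula $|\Gamma(\sigma+i\tau)|\asymp\sqrt{2\pi}\,e^{-\pi|\tau|/2}|\tau|^{\sigma-1/2}$ to every $\Gamma$-factor, rewriting the integrand as a product of values of $\zeta$ on the lines ${\rm Re}=\tfrac12$ and ${\rm Re}=1$, explicit powers of the shifted heights, and a single exponential weight.

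The exponential factors drive the localization. Collecting them, the weight has the shape $\exp\!\big(\pi T-\tfrac{\pi}{4}(|t+aT|+|t+bT|)\big)$, with shifts $(a,b)$ depending on $\eta$: namely $(2,-2)$ for $\eta=0$, and $(0,4)$, $(0,-4)$ for $\eta=\pm2Ti$. Since $|t+aT|+|t+bT|\ge 4T$ with equality on an interval of length $\asymp T$, this weight is $\le 1$ everywhere, identically $1$ on that interval, and decays exponentially outside it; thus everything beyond the window contributes $O(e^{-cT})$, exactly as in Lemma~\ref{Lem_bound_cont_spec}, and I may restrict to $|t|\ll T$.

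On the window I would bound the $\zeta$-quotient. The denominator factors $1/|\zeta(1+\cdots)|$ are harmless: the lower bound in (\ref{zeta_estimates_Re=1}) makes them $\ll(\log T)^{O(1)}\ll T^\epsilon$, and where such an argument returns to the real axis the pole of $\zeta$ at $1$ only helps. The decisive input is the subconvexity bound $\zeta(\tfrac12+i\tau)\ll(1+|\tau|)^{\theta+\epsilon}$ with a fixed $\theta<\tfrac16$. Each numerator factor $\zeta(\tfrac12+i\tau)$ whose argument has $|\tau|\asymp T$ carries a Stirling power $(1+|\tau|)^{-1/4}$ (or $(1+|\tau|)^{-1/2}$ when it is squared), and subconvexity turns $(1+|\tau|)^{-1/4}|\zeta(\tfrac12+i\tau)|\ll T^{\theta-1/4}$. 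Arranging that the equivalent of two such quarter-powers is available in the bulk of the window produces the saving $T^{2\theta-1/2}=T^{-1/6-\delta}$ with $\delta=\tfrac13-2\theta>0$; the remaining squared factor is then controlled by the second moment $\int_0^{3T}|\zeta(\tfrac12+ti)|^2(1+|t|)^{-1}\,dt\ll T^\epsilon$ and its weighted variants, and $\delta$ absorbs the surviving $T^\epsilon$ to yield the clean exponent $-1/6$.

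The step I expect to be the main obstacle is carrying this out uniformly across $\eta$ near the \emph{resonance points}, where one shifted argument degenerates to small modulus so that subconvexity gives no gain there. At such a point I would decompose the window and argue that the complementary factors compensate. For $\eta=0$ both $t\pm 2T$ are $\asymp T$ on the inner part, so two quarter-powers are genuinely present; near $t=\pm2T$ the weight $(1+|t|)^{-3/2}$ is of size $T^{-3/2}$ and over-delivers the required saving. For $\eta=\pm2Ti$ I would instead use subconvexity on the squared central factor on the part of the window where \emph{its} argument is $\asymp T$, and on both off-resonance factors (one of which carries the very favorable power $(1+|\tau|)^{-5/4}$) on the complementary part. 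Verifying that in every subregion at least the equivalent of two off-axis quarter-powers is in play — so that the full $T^{-1/6-\delta}$ emerges, no $T^\epsilon$ survives, and the strict inequality $\theta<\tfrac16$ is genuinely used — is the delicate bookkeeping the proof must pin down.
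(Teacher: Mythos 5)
Your proposal is correct and takes essentially the same route as the paper's proof: pull out $A^{1/2}$, apply Stirling to reduce to zeta factors with the exponential weight $e^{\frac{\pi}{4}(4T-|t+aT|-|t+bT|)}$, use the edge-of-critical-strip bounds (\ref{zeta_estimates_Re=1}) for the denominator factors, subconvexity with $\theta<\frac16$ off resonance, and moment/favorable-power arguments near the resonance points, split by ranges of $t$. The only differences are cosmetic: the paper dispatches $\eta=0$ by citing Spinu's thesis (section 4.3.2) and works out only $\eta=\pm 2Ti$ explicitly (where it records the stronger bound $O(A^{1/2}T^{-1/4})$), and it splits the integral exactly at the window edges $t=0,-4T$, obtaining polynomially small (rather than exponentially small) contributions from the outer ranges --- which is also what your restriction to $|t|\ll T$ amounts to once the implied constant is taken large.
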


\begin{proof} The analysis is similar to that in Lemma \ref{Lem_bound_cont_spec}. The case $\eta=0$ was treated in \cite{Sp}, section 4.3.2, where the bound $O(A^{1/2} T^{-1/6})$ is obtained. Here, we treat the case $\eta=2Ti$ (for $\eta=-2Ti$, the value of the integral is the same). Using Stirling's formula, we see that the integrand is bounded by
$$
A^{1/2} \frac{e^{\frac{\pi}{4} (4T -|t|  -|t +4T|) }}{(1+|t|)^{5/4} (1+|t+ 2T|)^{1/2} (1+|t+ 4T|)^{1/4}}
$$
$$
\times  \left| \frac{\zeta(\frac{1}{2} +ti) \; \zeta^2(\frac{1}{2} +(t +2 T)i) \; \zeta(\frac{1}{2} +(t +4T)i  )}{ \zeta^2(1+ 2Ti) \; \zeta(1+ (2t+4T)i )} \right|.
$$
Using subconvexity estimate $\zeta(1/2 + ti) \ll (1+|t|)^{\theta + \epsilon}$, for all $\epsilon >0$ and some $\theta < \frac{1}{6}$ for the zeta-functions in the numerator and (\ref{zeta_estimates_Re=1}) for the zeta-functions in the denominator, this is further bounded by
$$
A^{1/2} T^{\epsilon} \frac{e^{\frac{\pi}{4} (4T -|t|  -|t +4T|) }}{(1+|t|)^{\frac{5}{4}  -\theta -\epsilon} (1+|t+ 2T|)^{\frac{1}{2} -2\theta -\epsilon} (1+|t+ 4T|)^{\frac{1}{4} -\theta - \epsilon}}.
$$
We split the integration into 3 ranges: $\int_{-\infty}^{\infty}= \int_{-\infty}^{-4T} + \int_{-4T}^{0} + \int_{0}^{\infty}$. In the first and the third range we have an exponential decay of the integrand and so we have that in these ranges the integrals are bounded respectively by
$$
\int_{0}^{\infty}  \ll A^{1/2} T^{\epsilon} \int_{0}^{\infty} \frac{e^{- \frac{\pi}{2} t  }}{(1+|t|)^{\frac{5}{4}  -\theta -\epsilon}  T^{\frac{1}{2} -2\theta + \frac{1}{4} -\theta}} dt \ll A^{1/2} T^{-1/4}
$$
and
$$
\int_{-\infty}^{-4T} \ll  A^{1/2} T^{\epsilon} \int_{-\infty}^{-4T} \frac{e^{\frac{\pi}{2}(t +4T)  }}{T^{\frac{5}{4}  -\theta +\frac{1}{2} -2\theta} (1+|t+ 4T|)^{\frac{1}{4} -\theta - \epsilon}} dt \ll A^{1/2} T^{-5/4}.
$$
In the middle range the integral is bounded by
$$
\ll  A^{1/2} T^{\epsilon} \int_{-4T}^0 \frac{ dt }{(1+|t|)^{\frac{5}{4}  -\theta } (1+|t+ 2T|)^{\frac{1}{2} -2\theta } (1+|t+ 4T|)^{\frac{1}{4} -\theta }}
$$
$$
\ll  A^{1/2} T^{ \theta - \frac{1}{4} +\epsilon} \int_{-2T}^0 \frac{ dt }{(1+|t|)^{\frac{5}{4}  -\theta } (1+|t+ 2T|)^{\frac{1}{2} -2\theta } } \ll A^{1/2} T^{ 3\theta - \frac{3}{4} +\epsilon} \ll A^{1/2} T^{ -1/4 }.
$$

Therefore, in the cases $\eta=\pm 2Ti$, we get an even better bound $O(A^{1/2} T^{ -1/4 })$.
\end{proof}

\vspace{1cm}

After we collect everything together, and use asymptotic formulas for $\frac{\xi'}{\xi}(1+2iT)$ and $\frac{\xi''}{\xi}(1+2iT)$ already seen in the previous section, we get that the  contribution of the first integral in (\ref{formula_difference}) is
$$
\frac{12}{|\xi(1+ 2Ti)|^2} I_0 + \frac{6}{\xi^2(1-2Ti)} I_{2Ti}   + \frac{6}{\xi^2(1+2Ti)} I_{-2Ti}
$$
$$
=\frac{12}{|\xi(1+ 2Ti)|^2} {\mathcal{R}}_0 + \frac{6}{\xi^2(1-2Ti)} \mathcal{R}_{2Ti}   + \frac{6}{\xi^2(1+2Ti)} \mathcal{R}_{-2Ti}  + O( A^{1/2} T^{-1/6} )
$$
\begin{equation} \label{eq_calculation_E^2e^2}
= \frac{36}{\pi} \log^2 T + O(\log^{\frac53+\epsilon} T), 
\end{equation}
for the range $1 < A \ll \log T$. The main contribution is coming from $R_{0,1}$.
 
\smallskip

Putting together all our calculations in this section, we end up with the following Proposition:

\begin{prop} \label{prop_difference} When $T \rightarrow \infty$, for any value of the truncation parameter $1 < A \ll \log T$  we have 
$$
\int_{X}^{reg} |E(z , \frac{1}{2} + iT)|^4 d \mu(z)=
$$
\begin{equation} \label{eq_final_difference_asymptotic}
\int_{X} |E_A(z, \frac{1}{2} +iT)|^4 d \mu(z)
 +\frac{36}{\pi} \log^2 T   + 2 \int_{\mathcal{C}_A} ( |E_A|^2 \overline{E}_Ae + |E_A|^2 E_A \overline{e} ) d \mu \;
 + O(\log^{\frac53+\epsilon} T).
\end{equation}

\end{prop}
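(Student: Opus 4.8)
The plan is to read Proposition~\ref{prop_difference} off the exact decomposition~(\ref{formula_difference}), so that the only genuine work is to pin down two of the pieces on its right-hand side: the first cuspidal integral $\int_{\mathcal{C}_A}(e^2\overline{E}_A^2+\overline{e}^2E_A^2+4|e|^2|E_A|^2)\,d\mu$ and the term $-\hat\Phi(A)$. The latter I would dispatch immediately, since $\hat\Phi(A)\ll A\ll\log T$ in the range $1<A\ll\log T$, so it is absorbed into the error $O(\log^{5/3+\epsilon}T)$. The remaining pieces of~(\ref{formula_difference})---the odd-degree integral $2\int_{\mathcal{C}_A}(|E_A|^2\overline{E}_Ae+|E_A|^2E_A\overline{e})\,d\mu$ and the term $\|E_A\|_4^4=\int_X|E_A|^4\,d\mu$---are carried along unchanged, exactly as they appear in the statement. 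Thus everything reduces to showing that the first cuspidal integral equals $\tfrac{36}{\pi}\log^2T+O(\log^{5/3+\epsilon}T)$.

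To evaluate that integral I would first exploit the reality of $\xi(1+2Ti)E_A$ on $\mathcal{C}_A$, which follows from the Fourier expansion~(\ref{eq_Fourier_real}) and the fact that $K_{iT}(y)$ is real for $y>0$. Writing $c=\varphi(\tfrac12+iT)=\xi(1-2Ti)/\xi(1+2Ti)$ with $|c|=1$ and expanding $e^2$, $\overline{e}^2$ and $|e|^2$ in powers $y^{1\pm2Ti}$ and $y$, the three summands collapse (after cancellation using $|c|=1$) into a single expression with coefficients $12,6,6$, so that the first integral becomes a linear combination of the twisted second moments
\[
I_\eta:=\int_{\mathcal{C}_A}y^{1+\eta}\,\xi^2(1+2Ti)\,E_A^2(z,\tfrac12+iT)\,d\mu(z),\qquad \eta\in\{0,\pm2Ti\},
\]
with prefactors $\tfrac{12}{|\xi(1+2Ti)|^2}$, $\tfrac{6}{\xi^2(1-2Ti)}$, $\tfrac{6}{\xi^2(1+2Ti)}$ respectively. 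Each $I_\eta$ I would compute by inserting~(\ref{eq_Fourier_real}), using Parseval in $x$ to diagonalise the $n$-sum, substituting the Mellin transform of $g(x)=\int_x^\infty K_{iT}^2(y)\,y^\eta\,dy$ via the Mellin--Barnes formula, summing the resulting Dirichlet series by Ramanujan's identity, and reassembling the Gamma- and zeta-factors into completed $\xi$-functions. This produces the contour representation in~(\ref{shifting_contour}).

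The main term is then extracted by shifting the contour from $\mathrm{Re}(s)=4$ to $\mathrm{Re}(s)=\tfrac12$. The integral on the new line is $O(A^{1/2}T^{-1/6})$ by Lemma~\ref{Lemma_bound_shifted_integrals}, hence negligible. Of the residues collected en route, the off-diagonal contributions $\tfrac{6}{\xi^2(1\mp2Ti)}\mathcal{R}_{\pm2Ti}=O(\log^2T\cdot T^{-1/2})$ and the simple-pole residues $R_{0,1\pm2Ti}$ are all of lower order once multiplied by their prefactors; only the triple-pole residue $R_{0,1}$ at $s=1$ in $I_0$ survives. Multiplying $R_{0,1}$ by $\tfrac{12}{|\xi(1+2Ti)|^2}$ and using $\xi(2)=\tfrac{\pi}{6}$ yields $\tfrac{72}{\pi}$ times a bracket whose dominant part is $|\tfrac{\xi'}{\xi}(1+2Ti)|^2+\mathrm{Re}\,\tfrac{\xi''}{\xi}(1+2Ti)$, while every $\log A$-dependent and constant term in the bracket is at most $O(\log^{1+\epsilon}T)$ because $A\ll\log T$.

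Finally I would feed in the asymptotics $|\tfrac{\xi'}{\xi}(1+2Ti)|^2=\tfrac14\log^2T+O(\log^{5/3+\epsilon}T)$ and $\mathrm{Re}\,\tfrac{\xi''}{\xi}(1+2Ti)=\tfrac14\log^2T+O(\log^{5/3+\epsilon}T)$, which follow from Stirling's formula together with the edge-of-strip zeta bounds~(\ref{zeta_estimates_Re=1}) and the second-derivative estimate of Lemma~\ref{lem_zeta_2ndDer}. This gives $\tfrac{72}{\pi}\cdot\tfrac12\log^2T=\tfrac{36}{\pi}\log^2T$ for the first integral, and substituting back into~(\ref{formula_difference}) yields the stated identity. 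I expect the main obstacle to be the bookkeeping of the triple-pole residue $R_{0,1}$: one must check that the genuinely quadratic-in-$\log T$ contribution comes only from $|\tfrac{\xi'}{\xi}|^2$ and $\mathrm{Re}\,\tfrac{\xi''}{\xi}$, that all $\log A$-, cross-, and off-diagonal terms are truly lower order, and that the error can be held to $O(\log^{5/3+\epsilon}T)$---which is precisely where Lemma~\ref{lem_zeta_2ndDer} and the subconvexity input underlying Lemma~\ref{Lemma_bound_shifted_integrals} do the essential work.
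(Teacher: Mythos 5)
Your proposal is correct and follows essentially the same route as the paper: you reduce via the decomposition (\ref{formula_difference}) with $\hat{\Phi}(A)\ll A\ll\log T$, use the reality of $\xi(1+2Ti)E_A$ on $\mathcal{C}_A$ to express the even cuspidal integral through the twisted moments $I_\eta$ with prefactors $12,6,6$, evaluate these by the same Mellin--Barnes/Ramanujan contour representation and shift to ${\rm Re}(s)=\tfrac12$, and isolate the triple-pole residue $R_{0,1}$ as the sole source of the main term, exactly as the paper does. Your bookkeeping of the bracket in $R_{0,1}$ (the $\log A$ and constant terms being $O(\log^{1+\epsilon}T)$, and $|\tfrac{\xi'}{\xi}(1+2Ti)|^2+{\rm Re}\,\tfrac{\xi''}{\xi}(1+2Ti)=\tfrac12\log^2T+O(\log^{5/3+\epsilon}T)$) reproduces the paper's computation of $\tfrac{72}{\pi}\cdot\tfrac12\log^2 T=\tfrac{36}{\pi}\log^2T$.
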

\noindent The first integral on the right hand side of (\ref{eq_final_difference_asymptotic}) is asymptotic to $\frac{36}{\pi} \log^2 T$ under assumption (\ref{RWC_for_truncatedEisen}). The integral over cuspidal region $\mathcal{C}_A$ in  (\ref{eq_final_difference_asymptotic}) is bounded by
\begin{equation} \label{Cauchy-Schwarz}
4 \int_{\mathcal{C}_A} \left|E_A^3(z, \frac{1}{2} +Ti) e(y, \frac{1}{2} +Ti) \right|   d \mu(z) \le 4 \left( \int_{\mathcal{C}_A} |E_A|^4 d\mu \right)^{1/2} \left( \int_{\mathcal{C}_A} |e E_A|^2 d\mu \right)^{1/2}.
\end{equation} 
The second integral on the right hand side of (\ref{Cauchy-Schwarz}) is $\sim \frac{6}{\pi} \log^2 T$; this is implicit in the calculation of the first integral in (\ref{formula_difference}). Under the RWC, the first integral on the right hand side of (\ref{Cauchy-Schwarz}) can be bounded by
$$
\int_{X} |E_A|^4 d\mu - \int_{\mathcal{D}_A} |E_A|^4 d\mu \ll ({\rm vol}(X)-{\rm vol}(\mathcal{D}_A)) \log^2 T \ll A^{-1} \log^2 T,
$$
which is $o(\log^2 T)$ if $A$ grows arbitrary slowly to infinity as $T\to \infty$. This way the right hand side of (\ref{eq_final_difference_asymptotic}) is asymptotic to $\frac{72}{\pi} \log^2 T $.

\bigskip

{\bf Acknowledgment.} Work on this project started when the first author visited Texas A\&M University at Qatar. He wishes to thank that institution for hospitality and  excellent working conditions.


\end{document}